\numberwithin{equation}{section}
\newtheorem{theorem}{Theorem}[section]
\newtheorem{corollary}[theorem]{Corollary}
\newtheorem{lemma}[theorem]{Lemma}
\theoremstyle{remark}
\newtheorem{remark}[theorem]{Remark}
\newtheorem{example}[theorem]{\bf Example}
\newcommand{\R}{\mathbb{R}}
\newcommand{\C}{\mathbb{C}}
\begin{document}

\title [Minimal surfaces in $\R^n$ via potentials]{\bf{ Willmore surfaces in spheres via loop groups III:  on minimal surfaces in space forms}}
\author{Peng Wang}
 
\maketitle

\begin{center}
{\bf Abstract}
\end{center}

\begin{abstract}
 The family of Willmore immersions from a Riemann surface into $S^{n+2}$ can be divided naturally into the subfamily of Willmore surfaces conformally equivalent to a minimal surface in $\R^{n+2}$ and those which are not conformally  equivalent to a minimal surface in $\R^{n+2}$. On the level of their conformal Gauss maps into $Gr_{1,3}(\R^{1,n+3})=SO^+(1,n+3)/SO^+(1,3)\times SO(n)$ these two classes of Willmore immersions into $S^{n+2}$ correspond to conformally harmonic maps for which every image point, considered as a 4-dimensional Lorentzian subspace of $\R^{1,n+3}$, contains a fixed lightlike vector  or
where it does not contain such a ``constant lightlike vector". Using the loop group formalism for the construction of Willmore immersions we characterize in this paper precisely those normalized potentials which correspond to conformally harmonic maps containing a lightlike vector. Since the special form of these potentials can easily be avoided, we also precisely characterize those potentials which produce Willmore immersions into $ S^{n+2}$ which are not conformal to a minimal surface in $\R^{n+2}$. It turns out that our proof also works analogously for minimal immersions into the other space forms.
\end{abstract}

{\bf Keywords:}   Willmore surfaces, normalized potential, minimal surfaces, Iwasawa decompositions.\\

\textit{Mathematics Subject Classification}.  Primary 53A30; Secondary 58E20; 53C43;  	  	53C35

\section{Introduction}

This is the third paper of a series of papers concerning the global geometry of Willmore surfaces in  terms of loop group theory. Our aim is to derive a criterion  characterizing the normalized potentials of all strongly conformally harmonic maps which either correspond to minimal surfaces in $\R^{n+2}$ or to no Willmore surfaces at all.  On the one hand, this provides a characterization  of minimal surfaces in $\R^{n+2}$, as well as this special class of strongly conformally harmonic maps. On the other hand, it also allows us to derive Willmore surfaces different from minimal surfaces in $\R^{n+2}$
by excluding this special type of normalized potentials. This is particularly important for the application of the main results of \cite{DoWa1} to generic Willmore surfaces in $S^{n+2}$.

It is well-known that minimal surfaces in Riemannian space forms provide standard examples of Willmore surfaces \cite{Bryant1984}, \cite{Bryant1988}, \cite{Weiner}. To pick up all minimal surfaces in space forms among Willmore surfaces, we provide a description of them via potentials.

To be concrete, let  $\mathcal{F}$ be a harmonic map from a Riemann surface $M$ into $Gr_{1,3}(\R^{1,n+3})$ ($=SO^+(1,n+3)/SO^+(1,3)\times SO(n)$), with a lift $F:M\rightarrow SO^+(1,n+3)$ and M-C form
\[\alpha =F^{-1}dF=\left(
                      \begin{array}{cc}
                        A_1 &  B_1 \\
                        -B_1^tI_{1,3} &  A_2 \\
                      \end{array}
                    \right)dz+\left(
                      \begin{array}{cc}
                        \bar{A}_1  &  \bar{B}_1 \\
                        -\bar{B}_1^tI_{1,3} & \bar{A}_2  \\
                      \end{array}
                    \right)d\bar{z}.\]
Here \[A_1\in Mat(4\times 4, \C),\ A_2\in Mat(n\times n, \C),\ B_1\in Mat(4\times n, \C),\ I_{1,3}=\hbox{diag}(-1,1,1,1) .\]
 $\mathcal{F}$ is called strongly conformal if $B_1$ satisfies $B_1^tI_{1,3}B_1=0$. Note that this condition is independent of the choice of $F$ \cite{DoWa1}. We also recall  that the conformal Gauss map of a Willmore surface is a strongly conformally harmonic map \cite{DoWa1}. Conversely, by Theorem 3.10 of \cite{DoWa1}, there are two different kinds of strongly conformally harmonic maps:

{\vspace{2mm}\em   Those  which contain a constant lightlike vector and those which do not contain a constant lightlike vector.
\vspace{2mm}} \\
Moreover from Theorem 3.10 of \cite{DoWa1}, we see that if a strongly conformally  harmonic map $\mathcal{F}$ does not contain a lightlike vector, $f$ will always be the conformal Gauss map of some Willmore map.
 And, most importantly, these Willmore maps correspond  exactly to all the Willmore maps which are {\em not} M\"{o}bius equivalent to any minimal surface in $\mathbb{R}^{n+2}$, since minimal surfaces in $\mathbb{R}^{n+2}$ can be characterized as  those Willmore surfaces whose conformal Gauss map contains a constant  lightlike vector \cite{Helein}, \cite{Xia-Shen}, \cite{Ma-W1} (See Lemma 1.2 below, see also \cite{Bryant1984}, \cite{Ejiri1988}, \cite{Mus1}, \cite{Mon} and \cite{BFLPP}). Since minimal surfaces in  $\mathbb{R}^{n+2}$  can be constructed by a straightforward way, one will be mainly interested in Willmore surfaces {\em not } M\"{o}bius equivalent to minimal surfaces in $\mathbb{R}^{n+2}$.  It is therefore vital to derive a  criterion determining whether a strongly conformally harmonic map $f$ contains a lightlike vector or not. Note that this will also yield an interesting description of minimal surfaces in $\R^{n+2}$.  Applying Wu's formula, one will obtain the following description of the normalized potential of $\mathcal{F}$ when it contains a constant light-like vector.

\begin{theorem}\label{th-potential-light}Let $\mathbb{D}$ denote the Riemann surface $S^2,$ $\mathbb{C}$ or the unit disk of $\mathbb{C}$. Let $\mathcal{F}:  \mathbb{D}\rightarrow SO^+(1,n+3)/SO^+(1,3)\times SO(n)$ be a strongly conformally harmonic map which contains a constant light-like vector. Assume that $f(p)=I_{n+4}\cdot K$ w.r.t some base point $p\in\mathbb{D}$ and $z$ is a local coordinate with $z(p)=0$. Then the normalized potential of $f$ with reference point $p$ is of the form
 \begin{equation}\label{eq-w-minimal}
\eta=\lambda^{-1}\left(
                     \begin{array}{cc}
                       0 & \hat{B}_1 \\
                       -\hat{B}^{t}_1I_{1,3} & 0 \\
                     \end{array}
                   \right)dz,\ \hbox{ where }\ \hat{B}_{1}=\left(
\begin{array}{cccc}
 \hat{f}_{11} & \hat{f}_{12} & \ldots &  \hat{f}_{1n} \\
 -\hat{f}_{11} &  -\hat{f}_{12} & \ldots &  -\hat{f}_{1n} \\
 \hat{f}_{31} &\hat{f}_{32} & \ldots &  \hat{f}_{3n} \\
 i\hat{f}_{31} & i\hat{f}_{32} & \ldots &  i\hat{f}_{3n} \\
 \end{array}
\right).\end{equation}
Here the functions $f_{ij}$ are meromorphic  functions on $\mathbb{D}$.

Moreover, $\mathcal{F}$ is of finite uniton type with maximal uniton number $\leq 2$.\end{theorem}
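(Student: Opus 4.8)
The plan is to use the constant lightlike vector $\mathbf v_0$ to reduce the moving frame to a parabolic-type subgroup, to recognize via Lemma~1.2 that $\mathcal F$ is then the conformal Gauss map of a minimal surface in $\R^{n+2}$, to compute the Maurer--Cartan form of an adapted frame and feed it into Wu's formula, and finally to read off a nilpotency property bounding the uniton number.

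\emph{Reduction.} Since $f(p)=\langle\epsilon_1,\dots,\epsilon_4\rangle$ and $\mathbf v_0$ lies in every $f(z)$, in particular $\mathbf v_0\in\langle\epsilon_1,\dots,\epsilon_4\rangle$. Acting on $f$ by a constant element of the first factor $SO^+(1,3)\subset SO^+(1,3)\times SO(n)$ --- which fixes the reference point and conjugates the normalized potential by that element --- we may assume $\mathbf v_0=(1,-1,0,0,0,\dots,0)^t$. Because the coordinates of $\mathbf v_0$ in any adapted moving frame form a fixed $I_{1,3}$-null vector of $\R^{1,3}$, a gauge transformation by an $SO^+(1,3)$-valued map that is trivial at $p$ puts the lift $F$ into $\mathrm{Stab}(\mathbf v_0)\subset SO^+(1,n+3)$ without altering the normalized potential. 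In the light-cone model $\mathrm{Stab}(\mathbf v_0)$ is the Euclidean motion group of the $\R^{n+2}$ having $[\mathbf v_0]$ as point at infinity, so $F$ is now a Euclidean adapted frame, and by Lemma~1.2 the surface $y$ it frames is minimal. We choose $F=(Y_1,Y_2,Y_3,Y_4\mid Y_5,\dots,Y_{n+4})$ to be the canonical frame of $y$ built from a conformal coordinate $z$, an orthonormal tangent frame $Y_3,Y_4$ of $dy$ and an adapted normal frame; then $Y_1-Y_2\equiv\mathbf v_0$.

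\emph{Normal form via Wu's formula.} In this frame the $(1,0)$ off-diagonal block $B_1\,dz$ of $\alpha=F^{-1}dF$ encodes the derivatives of $Y_1,\dots,Y_4$ in the normal directions. Since $F$ fixes $\mathbf v_0$, rows $1$ and $2$ of $B_1$ are opposite; since $y$ is minimal its second fundamental form is trace-free, so in the conformal coordinate the $(1,0)$ shape operator is of type $(2,0)$, forcing the $(3,4)$-part of each column of $B_1$ to be a multiple of $(1,i)^t$ with multiplier proportional to the corresponding component of the (holomorphic) Hopf differential of $y$; thus every column of $B_1$ lies in the \emph{fixed} complex isotropic plane $W:=\mathrm{span}_{\C}\{(1,-1,0,0)^t,(0,0,1,i)^t\}$, and $B_1$ already has the shape displayed for $\hat B_1$. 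A short computation shows that $W$ is invariant under $\mathrm{Stab}_{\mathfrak{so}(1,3)}\big((1,-1,0,0)\big)$, which is spanned by the $(3,4)$-rotation and the two null rotations, each preserving $W$; and the $\mathfrak{so}(n)$-part of $\alpha$ only recombines the columns of $B_1$. Consequently the iterated-bracket recursion of Wu's formula --- which for the $\lambda^{-1}$-coefficient of $\eta$ involves only the $\lambda^{-1}$- and $\lambda^0$-parts of the extended Maurer--Cartan form of $F$ --- never leaves the class ``columns in $W$''. This yields $\eta=\lambda^{-1}\eta_{-1}(z)\,dz$ with $\eta_{-1}=\left(\begin{smallmatrix}0&\hat B_1\\-\hat B_1^tI_{1,3}&0\end{smallmatrix}\right)$ and $\hat B_1$ of exactly the stated form, the columns $(\hat f_{1j},-\hat f_{1j},\hat f_{3j},i\hat f_{3j})^t$ being the general element of $W$. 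As $\D$ is $S^2$, $\C$ or the unit disk, the holomorphic functions $\hat f_{1j},\hat f_{3j}$ produced by Wu's formula extend to meromorphic functions on $\D$, with singularities only where the Birkhoff splitting degenerates.

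\emph{Uniton number.} Write $\hat B_1(z)=u\,f_1(z)^t+v\,f_3(z)^t$ with the \emph{constant} vectors $u=(1,-1,0,0)^t$, $v=(0,0,1,i)^t$, which satisfy $u^tI_{1,3}u=v^tI_{1,3}v=u^tI_{1,3}v=0$. For $\xi(z)=\left(\begin{smallmatrix}0&\hat B_1(z)\\-\hat B_1(z)^tI_{1,3}&0\end{smallmatrix}\right)$ these isotropy relations make the lower-right block of any product $\xi(z_1)\xi(z_2)$ vanish and show that its upper-left block annihilates both $u$ and $v$, hence kills $\hat B_1$; therefore $\xi(z_1)\xi(z_2)\xi(z_3)\equiv 0$. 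It follows that the solution $F_-$ of $F_-^{-1}dF_-=\eta$, $F_-(p)=I$, terminates: $F_-=I+\lambda^{-1}C_1(z)+\lambda^{-2}C_2(z)$ with $C_1,C_2$ meromorphic. Together with the standard characterization of finite uniton type in terms of the normalized potential (cf.~\cite{DoWa1}), this shows $\mathcal F$ is of finite uniton type with maximal uniton number $\le 2$.

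\emph{Main difficulty and scope.} The technical core is the normal-form step: writing the adapted frame down precisely, identifying the $(3,4)$-part of $B_1$ with the Hopf differential, and verifying the $\mathrm{Stab}_{\mathfrak{so}(1,3)}\big((1,-1,0,0)\big)$-invariance of $W$ so that Wu's recursion cannot escape the stated shape; here the reduction step, the strong conformality $B_1^tI_{1,3}B_1=0$, and the minimality of $y$ are all used together. Finally, nothing in this argument is special to $\R^{n+2}$: replacing $\mathbf v_0$ by the vector encoding the relevant ``point at infinity'' and otherwise repeating the computation gives the analogous description for minimal immersions into the other space forms, as announced in the abstract.
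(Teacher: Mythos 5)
There is a genuine gap in the normal-form step, which is the heart of the proof. You derive the $(1,i)^t$-structure of rows $3$ and $4$ of $B_1$ by first invoking Lemma 1.2 to produce a minimal surface $y$ in $\R^{n+2}$ and then reading the structure off the trace-free second fundamental form (Hopf differential) of $y$. But Theorem \ref{th-potential-light} is stated for an arbitrary strongly conformally harmonic map containing a constant lightlike vector, with no assumption that $\mathcal{F}$ is the conformal Gauss map of any Willmore surface; Lemma 1.2 only applies when such a surface is given in advance. Indeed, by Theorem \ref{th-minimal} and Corollary \ref{cor-min}, there exist strongly conformally harmonic maps with potentials of the form \eqref{eq-w-minimal} (those of maximal rank $2$) which contain a constant lightlike vector but are \emph{not} the conformal Gauss map of any Willmore surface, so your argument simply does not reach them. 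The repair is purely algebraic and is what the paper does: constancy of $\phi_1-\phi_2$ forces rows $1,2$ of $B_1$ to be opposite, say $(\sqrt2\beta_j,-\sqrt2\beta_j)$, after which the strong conformality condition $B_1^tI_{1,3}B_1=0$ collapses to $k_jk_l+\hat k_j\hat k_l=0$ for all $j,l$ (the contributions of the first two rows cancel), giving $\hat k_j=\pm ik_j$ with a uniform sign chosen as in Lemma 3.8 of \cite{DoWa1}. No surface, minimality, or Hopf differential enters. (A secondary soft spot: even in the Willmore case, identifying the $(3,4)$-part of the columns of $B_1$ for the \emph{conformal Gauss map} frame with the Hopf differential of $y$ is asserted rather than computed, and the sign ambiguity $\pm i$ is not addressed.)

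The remainder of your argument is sound and in places cleaner than the paper's. Your observation that $W=\mathrm{span}_{\C}\{(1,-1,0,0)^t,(0,0,1,i)^t\}$ is invariant under $\mathrm{Stab}_{\mathfrak{so}(1,3)}\bigl((1,-1,0,0)^t\bigr)$, so that the conjugation $F_{01}\tilde B_1F_{02}^{-1}$ in Wu's formula cannot leave the class ``columns in $W$'', is a conceptual packaging of what the paper establishes by explicitly integrating the ODE for $F_{01}$ in Lemma \ref{lemma-ode} and multiplying out. Likewise your direct verification that $\xi(z_1)\xi(z_2)\xi(z_3)\equiv0$ from the isotropy relations $u^tI_{1,3}u=v^tI_{1,3}v=u^tI_{1,3}v=0$, hence $F_-=I+\lambda^{-1}C_1+\lambda^{-2}C_2$, gives the uniton bound in essentially the same way the paper obtains it via Lemma \ref{lemma-mini1}. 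Once the normal-form step is rederived from strong conformality alone, the proof goes through.
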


It is well-known that minimal surfaces in Riemannian space forms can be characterized by the following lemma (The statements and proofs can be found in \cite{Helein}, see also \cite{Ma-W1} for a proof of Case (1)).
\begin{lemma} \cite{Helein}, \cite{Xia-Shen} Let $y:M\rightarrow S^{n+2}$ be a Willmore surface, with $\mathcal{F}$ as its conformal Gauss map. We say that $\mathcal{F}$ contains a constant vector $a\in\R^{1,n+3}$ if for any
$p\in M$, $a$ is in the $4-$dim Lorentzian subspace $\mathcal{F}(p)$. Then
\begin{enumerate}
\item $y$ is M\"{o}bius equivalent to a minimal surface in $\mathbb{R}^{n+2}$ if and only if $\mathcal{F}$ contains a non-zero constant lightlike vector.
\item $y$ is M\"{o}bius equivalent to a minimal surface in some $S^{n+2}(c)$ if and only if $\mathcal{F}$ contains a non-zero constant timelike vector.
\item $y$ is M\"{o}bius equivalent to a minimal surface in $\mathbb{H}^{n+2}(c)$ if and only if $\mathcal{F}$ contains a non-zero constant spacelike vector.
\end{enumerate}
 \end{lemma}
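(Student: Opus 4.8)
The plan is to place everything in the light cone model $S^{n+2}=\mathbb{P}(\mathcal{L})$, where $\mathcal{L}=\{X\in\R^{1,n+3}\setminus\{0\}:\langle X,X\rangle=0\}$, and to reduce all three equivalences to one computation tying a constant vector contained in the conformal Gauss map to the vanishing of a mean curvature vector. First I would recall the explicit shape of $\mathcal{F}$: for a conformal immersion $y$ with local lift $Y:M\to\mathcal{L}$ and conformal coordinate $z$ normalized by $\langle Y_z,Y_z\rangle=0$ and $\langle Y_z,Y_{\bar z}\rangle=\tfrac12 e^{2\omega}$, let $N$ be the canonical lightlike conformal normal, i.e.\ the unique vector in $\mathrm{span}\{Y,Y_z,Y_{\bar z},Y_{z\bar z}\}$ with $\langle N,N\rangle=0$, $\langle N,Y\rangle=-1$, $\langle N,Y_z\rangle=\langle N,Y_{\bar z}\rangle=0$; using $\langle Y_{z\bar z},Y\rangle=-\tfrac12 e^{2\omega}$ one gets $N=\tfrac{2}{e^{2\omega}}Y_{z\bar z}+\rho Y$ with $\rho=\tfrac{2}{e^{4\omega}}\langle Y_{z\bar z},Y_{z\bar z}\rangle\in\R$. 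Then $\mathcal{F}(p)=\mathrm{span}_{\R}\{Y,\mathrm{Re}\,Y_z,\mathrm{Im}\,Y_z,N\}$, a $4$-dimensional Lorentzian subspace, independent of the choices of $Y$ and $z$, and the assignment $y\mapsto\mathcal{F}$ is equivariant under $SO^+(1,n+3)$. I would also record the space form dictionary: for a nonzero constant $a$ the slice $\mathcal{M}_a:=\{X\in\mathcal{L}:\langle X,a\rangle=-1\}$ is, with its induced metric, the flat $\R^{n+2}$, a round $S^{n+2}(c)$, or a hyperbolic $\mathbb{H}^{n+2}(c)$ according as $\langle a,a\rangle$ is $0$, negative, or positive (the constant $c$ being read off from $\langle a,a\rangle$), and $SO^+(1,n+3)$ carries any nonzero vector onto any other of the same causal type after rescaling.

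Combining these with Möbius equivariance, $\mathcal{F}_y$ contains a nonzero constant vector of a given causal type if and only if $y$ is Möbius equivalent to an immersion $y'$ contained in the corresponding standard space form $\mathcal{M}_a$ with $a\in\mathcal{F}_{y'}(p)$ for all $p$. Thus all three parts reduce to the following claim: if $y$ lies in $\mathcal{M}_a$ with the adapted lift $Y$ (so that $\langle Y,a\rangle\equiv-1$), then for each $p$
\[ a\in\mathcal{F}_y(p)\quad\Longleftrightarrow\quad \vec{H}(p)=0, \]
where $\vec{H}$ is the mean curvature vector of $y$ regarded as a surface in the space form $\mathcal{M}_a$.

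To prove the claim, write the membership $a\in\mathcal{F}_y(p)$ as $a=\alpha Y+\beta Y_z+\bar\beta Y_{\bar z}+\gamma N$ with $\alpha,\gamma\in\R$ and $\beta\in\C$. Pairing with $Y$ and using $\langle a,Y\rangle=-1$ gives $\gamma=1$; pairing with $Y_z$ and using $\langle a,Y_z\rangle=\partial_z\langle a,Y\rangle=0$ gives $\beta=0$. Hence $a\in\mathcal{F}_y(p)$ iff $N=a-\alpha Y$ for some $\alpha\in\R$. On the other hand, since $\langle a,Y\rangle$ is constant one computes $\langle Y_{z\bar z},a\rangle=0$, and together with $\langle Y_{z\bar z},Y\rangle=-\tfrac12 e^{2\omega}$, $\langle Y_{z\bar z},Y_z\rangle=\langle Y_{z\bar z},Y_{\bar z}\rangle=0$, $\langle a,a\rangle=\epsilon$ and $\langle a,Y\rangle=-1$, decomposing $Y_{z\bar z}$ along $\mathrm{span}\{a,Y\}$ and its orthogonal complement yields $\tfrac{2}{e^{2\omega}}Y_{z\bar z}=a+\epsilon Y+\vec{H}$, the remaining piece being tangent to $\mathcal{M}_a$ and normal to $M$, hence (a fixed positive multiple of) $\vec{H}$. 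Therefore $N=a+(\epsilon+\rho)Y+\vec{H}$, and comparing with $N=a-\alpha Y$ we obtain $a\in\mathcal{F}_y(p)\iff\vec{H}(p)=0$, which is the claim.

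The three assertions now follow by specialization: $a$ lightlike ($\epsilon=0$, $\mathcal{M}_a=\R^{n+2}$) gives (1); $a$ timelike ($\epsilon<0$, $\mathcal{M}_a=S^{n+2}(c)$) gives (2); $a$ spacelike ($\epsilon>0$, $\mathcal{M}_a=\mathbb{H}^{n+2}(c)$) gives (3). In each case the condition that $\mathcal{F}$ contains a nonzero constant vector of that causal type is equivalent to $\vec{H}$ vanishing identically on $\mathcal{M}_a$, i.e.\ to $y$ being minimal in $\mathcal{M}_a$, and Möbius equivariance of $\mathcal{F}$ together with the transitivity in the space form dictionary upgrades this to the stated Möbius-equivalence assertions. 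I expect the main difficulty to be bookkeeping rather than conceptual: fixing the space form dictionary with the correct signatures and the precise value of $c$, checking that $\mathrm{span}\{a,Y\}$ is Lorentzian so that $\mathcal{M}_a$ really is Riemannian, and carefully identifying the $\mathcal{M}_a$-tangential, $M$-normal component of $Y_{z\bar z}$ with the mean curvature vector of $y$ in $\mathcal{M}_a$, which is a standard but convention-sensitive Gauss-formula computation. One should also note the minor technical point that, in the lightlike case, after sending $a$ to the point at infinity the immersion $y$ might a priori meet $\infty$; this is handled on the open dense set where $y\neq\infty$, or by a further generic Möbius transformation.
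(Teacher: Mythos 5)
The paper does not prove this lemma: it is quoted from \cite{Helein}, \cite{Xia-Shen} (with \cite{Ma-W1} for case (1)), so there is no in-paper argument to compare against. Your proof is correct, and it is essentially the standard light-cone computation that those references carry out: identify $\mathcal{F}(p)=\mathrm{span}\{Y,\mathrm{Re}\,Y_z,\mathrm{Im}\,Y_z,N\}$, use the isometric embedding of each space form as a conic section $\mathcal{M}_a=\{X\in\mathcal{L}:\langle X,a\rangle=-1\}$, and observe that $\tfrac{2}{e^{2\omega}}Y_{z\bar z}=a+\epsilon Y+\vec H$ so that $a\in\mathcal{F}(p)$ iff $\vec H(p)=0$. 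Your normalizations check out ($\langle Y_{z\bar z},Y\rangle=-\tfrac12 e^{2\omega}$ forces $\gamma=1$, $\beta=0$; $\tfrac{2}{e^{2\omega}}Y_{z\bar z}$ is exactly the ambient mean curvature vector, so the residual term $V$ is precisely $\vec H$ of $y$ in $\mathcal{M}_a$, not just a multiple), and the final step $N=a-\alpha Y\Leftrightarrow\vec H=0$ is clean because $\vec H\perp a$ while $\langle Y,a\rangle=-1$. Two small points of bookkeeping: the ``points at infinity'' caveat you raise for the lightlike case applies equally to the spacelike case (where $\langle Y,a\rangle$ can vanish or change sign, corresponding to $y$ crossing the boundary sphere of $\mathbb{H}^{n+2}$), whereas in the timelike case $\langle Y,a\rangle$ is automatically nonzero of fixed sign; and one should note that rescaling $a$ does not affect membership in $\mathcal{F}(p)$, which is what lets you normalize $\langle Y,a\rangle=-1$ and absorb the curvature $c$ into $\langle a,a\rangle$. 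Neither affects the validity of the argument as an essentially local statement, which is how the lemma is used in the paper.
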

 Applying this lemma and Wu's formula, one obtains   the following descriptions of minimal surfaces in space forms.
\begin{theorem}\label{th-mini-spaceform}
Let $\mathcal{F}:M\rightarrow SO^+(1,n+3)/SO(1,3)\times SO(n)$ be a strongly conformally harmonic map. Let
\begin{equation}\label{eq-potential}
    \eta=\lambda^{-1}\left(
    \begin{array}{cc}
      0 & \hat{B}_1 \\
      -\hat{B}_1^tI_{1,3} & 0 \\
    \end{array}
  \right)dz, \ \hat{B}_{1}=\left(
\begin{array}{cccc}
 \mathbf{v}_1 & \mathbf{v}_2 & \ldots &  \mathbf{v}_n\\
 \end{array}
\right),\
\end{equation}
be the normalized potential of $\mathcal{F}$ with respect to some base point $z_0$. Then, up to a conjugation by some $T\in O^+(1,3)\times O(n)$,
\begin{enumerate}
\item $\mathcal{F}$ contains a constant lightlike vector, if and only if every $\mathbf{v}_j$ has the form
\begin{equation}\label{eq-potential-r}
 \mathbf{v}_j=
f_{j0}\left(
                                      \begin{array}{cccc}
                                        f_{j1} &
                                        -f_{j1} &
                                        f_{j3} &
                                        if_{j3} \\
                                      \end{array}
                                    \right)^t, \hbox{ with }  f_{jl}  \hbox{ meromorphic}.
\end{equation}

\item $\mathcal{F}$ contains a constant timelike vector, if and only if every $\mathbf{v}_j$ has the form
\begin{equation}\label{eq-potential-s}
\mathbf{v}_j=
g_{j}\left(
                                      \begin{array}{cccc}
                                        0 &
                                        2g_{0}&
                                        1-g_{0}^2 &
                                        i(1+g_{0}^2)\\
                                      \end{array}
                                    \right)^t, \hbox{ with }  g_{j},\ g_0  \hbox{ meromorphic}.
\end{equation}

\item $\mathcal{F}$ contains a constant spacelike vector, if and only if every $\mathbf{v}_j$ has the form
\begin{equation}\label{eq-potential-h}
 \mathbf{v}_j=
h_{j}\left(
                                      \begin{array}{cccc}
                                         2ih_{0} &
                                        0 &
                                        1-h_{0}^2 &
                                        i(1+h_{0}^2)\\
                                      \end{array}
                                    \right)^t, \hbox{ with }  h_{j},\ h_0  \hbox{ meromorphic}.
\end{equation}
\end{enumerate}
Potentials of above form will  be called  canonical potentials for the  corresponding minimal surfaces in space forms.
\end{theorem}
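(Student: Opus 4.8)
The plan is to prove each equivalence by reducing both implications to a single statement about the normalized potential. Fix the base‑point normalization $\mathcal F(z_0)=I\cdot K$, so that the $4$‑plane $\mathcal F(z_0)$ is the standard $V_0:=\mathrm{span}(e_1,\dots,e_4)\subset\R^{1,n+3}$, with orthogonal complement $V_0^\perp$; let $C$ solve $dC=C\eta$, $C(z_0,\cdot)=I$, so that $\eta=C^{-1}dC$ and $C=F_-$ in the Birkhoff decomposition of the extended frame of $\mathcal F$. A constant vector contained in $\mathcal F$ lies in $\mathcal F(z_0)=V_0$, and conjugating $\eta$ by $T\in O^+(1,3)\times O(n)$ replaces $\mathcal F$ by $T\cdot\mathcal F$, an isometry fixing $V_0$ setwise; since (after rescaling $\tilde a$) $O^+(1,3)$ acts transitively on nonzero lightlike vectors, on timelike vectors of a fixed norm, and on spacelike vectors of a fixed norm inside $\R^{1,3}$, we may assume the vector in question is $a=e_1-e_2$ in case (1), $a=e_1$ in case (2), $a=e_2$ in case (3), and we set $\tilde a=(a^{t},0,\dots,0)^{t}$. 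It is immediate that the columns $\mathbf v_j$ of $\hat B_1$ have the form prescribed in \eqref{eq-potential-r}, \eqref{eq-potential-s} or \eqref{eq-potential-h} exactly when $\hat B_1^tI_{1,3}a=0$, i.e.\ when $\eta_{-1}\tilde a=0$ (the off‑diagonal blocks of $\eta_{-1}$ being $\hat B_1$ and $-\hat B_1^tI_{1,3}$, and the last $n$ entries of $\tilde a$ vanishing); and differentiating $C\tilde a$ shows $\eta_{-1}\tilde a\equiv 0\Leftrightarrow C\tilde a\equiv\tilde a$. Thus it suffices to prove: $C\tilde a\equiv\tilde a$ if and only if $\mathcal F$ contains $\tilde a$.

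For the ``if'' direction, assume $\eta_{-1}\tilde a=0$, hence $C(z,\lambda)\tilde a\equiv\tilde a$. Iwasawa‑split $C=FW_+$ with $F$ in the real twisted loop group of $SO^+(1,n+3)$ and $W_+$ in the plus part. Then $F^{-1}\tilde a=W_+C^{-1}\tilde a=W_+\tilde a$ has only nonnegative powers of $\lambda$, whereas the reality condition on $F$ (with $\tilde a$ real) forces $F^{-1}\tilde a$ to be invariant under $\lambda\mapsto 1/\bar\lambda$ and hence to be a single $\lambda$‑independent real vector; that vector is $W_0(z)\tilde a$, where $W_0$ is the $\lambda^0$‑term of $W_+$, and the twisting condition makes $W_0$ block‑diagonal and $SO(1,3,\C)\times SO(n,\C)$‑valued, so $W_0$ preserves $V_0$. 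Hence $F(z,\bar z,\lambda)^{-1}\tilde a\in V_0$, i.e.\ $\tilde a\in\mathcal F_\lambda(z)$, for all $z$ and all $\lambda$; at $\lambda=1$ this says that $\mathcal F$ contains the constant vector $\tilde a$, which is lightlike, timelike, resp.\ spacelike by the choice of $a$.

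For the ``only if'' direction, assume $\mathcal F$ contains $\tilde a$, so $u(z,1):=F(z,\bar z,1)^{-1}\tilde a\in V_0$ for all $z$. Write the extended Maurer--Cartan form as $\alpha_\lambda=\lambda^{-1}\alpha'_{\mathfrak p}+\alpha_{\mathfrak k}+\lambda\alpha''_{\mathfrak p}$; for $u(z,\lambda):=F(z,\bar z,\lambda)^{-1}\tilde a$ we have $du=-\alpha_\lambda u$. Projecting at $\lambda=1$ onto $V_0^\perp$ — using that $u(\cdot,1)$ has vanishing $V_0^\perp$‑component, that $\alpha_{\mathfrak k}$ preserves $V_0$, and that $\alpha_{\mathfrak p}$ exchanges $V_0$ and $V_0^\perp$ — gives $(\alpha'_{\mathfrak p}+\alpha''_{\mathfrak p})u(\cdot,1)=0$, hence $\alpha'_{\mathfrak p}u(\cdot,1)=\alpha''_{\mathfrak p}u(\cdot,1)=0$ by bidegree, while the $V_0$‑projection gives $du(\cdot,1)=-\alpha_{\mathfrak k}u(\cdot,1)$. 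Therefore $v:=u(\cdot,1)$ solves $dv=-\alpha_\lambda v$ for \emph{every} $\lambda$, with $v(z_0)=\tilde a$, so by uniqueness $u(z,\lambda)=v(z)$ for all $\lambda$, i.e.\ $F(z,\bar z,\lambda)^{-1}\tilde a$ is independent of $\lambda$ (compare Theorem 3.10 of \cite{DoWa1}). Birkhoff‑decompose the extended frame, $F=CF_+$ with $C\in\Lambda^-_*$, $F_+\in\Lambda^+$, the factor $C$ being the one above. Then $\tilde a=F\,v(z)=C\,(F_+v(z))$, so $C^{-1}\tilde a=F_+v(z)$ has only nonnegative powers of $\lambda$; but $C^{-1}\tilde a$ also has only nonpositive powers, with $\lambda^0$‑term $\tilde a$, whence $C^{-1}\tilde a\equiv\tilde a$, i.e.\ $\eta_{-1}\tilde a=0$. (For case (1) one may instead simply quote Theorem \ref{th-potential-light}; cases (2) and (3) follow by the same scheme once the Lemma quoted above is used in the form pertaining to $S^{n+2}(c)$, resp.\ $\mathbb{H}^{n+2}(c)$.) This passage — promoting the constant‑vector condition from $\mathcal F$ to the whole associated family, which is exactly what renders the relevant linear algebra homogeneous in $\lambda$ and so transportable to $C$ and $\eta_{-1}$ — is the one point requiring care, and it is precisely the mechanism behind the use of Wu's formula in the proof of Theorem \ref{th-potential-light}; the ``if'' direction and the step below are routine by comparison.

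It remains to see that $\hat B_1^tI_{1,3}a=0$, together with strong conformality $\hat B_1^tI_{1,3}\hat B_1=0$ (which holds because $\mathcal F$ is strongly conformal), forces the canonical shapes. Both relations say that the columns $\mathbf v_j$ are $I_{1,3}$‑null vectors of $a^\perp\otimes\C$ and are pairwise $I_{1,3}$‑orthogonal. For $a=e_1$ or $a=e_2$ the space $a^\perp$ is a $3$‑dimensional nondegenerate quadratic space, so its complex null directions form a conic $\cong\mathbb P^1$; taking a rational normal parametrization $\zeta\mapsto n(\zeta)$ of it (the choice of orthogonal frame of $a^\perp$ being absorbed into $T$), a short computation gives $\langle n(\zeta),n(\zeta')\rangle=-2(\zeta-\zeta')^2$, so pairwise orthogonality forces all non‑zero columns to be meromorphic multiples of one common $n(g_0)$ in case (2), resp.\ $n(h_0)$ in case (3); choosing $n(\cdot)$ to be the explicit parametrizations in \eqref{eq-potential-s} and \eqref{eq-potential-h} yields those formulas. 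For the null $a=e_1-e_2$ of case (1) the complement $a^\perp$ is degenerate and its null cone splits into two lines through $a$; the same bookkeeping, after one further reflection in $O(1,3)$, leaves all columns on one of them and gives $\mathbf v_j=f_{j0}(f_{j1},-f_{j1},f_{j3},if_{j3})^t$, i.e.\ \eqref{eq-potential-r}, in agreement with Theorem \ref{th-potential-light}.
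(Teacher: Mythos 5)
Your argument is correct, and it is worth recording how it differs from the paper's. For Cases (2) and (3) the two proofs share the same skeleton --- normalize the constant vector by a conjugation in $O^+(1,3)\times O(n)$, show that containing $\tilde a$ is equivalent to $\eta_{-1}\tilde a=0$, then use $\hat{B}_1^tI_{1,3}\hat{B}_1=0$ to force the canonical shapes --- but the paper runs the ``only if'' direction through Wu's formula (vanishing of the first row and column of $\alpha$ propagates to the normalized potential), whereas you run it through the Birkhoff factorization: the $\lambda$-independence of $F_\lambda^{-1}\tilde a$ (Theorem 3.10 of \cite{DoWa1}) forces $C^{-1}\tilde a$ to lie in $\Lambda^-\cap\Lambda^+$, hence to equal $\tilde a$. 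The real divergence is Case (1): the paper deduces it from Theorem \ref{th-potential-light} (explicit integration of the holomorphic part of $A_1$ via Lemma \ref{lemma-ode} plus Wu's formula) together with the explicit Iwasawa decomposition of Sections 3--4 underlying Theorem \ref{th-minimal}, while your reality-and-twisting argument ($F^{-1}\tilde a=W_+\tilde a$ has only nonnegative $\lambda$-powers, reality kills the positive ones, and the $\lambda^0$-term lies in $K^{\C}$ and so preserves $V_0$) is insensitive to the causal type of $\tilde a$ and handles the lightlike case at the same abstract level. This is a genuine simplification for the statement at hand; what it does not recover is the surplus content of Theorem \ref{th-minimal} and Corollary \ref{cor-min} (the bound $\mathrm{rank}(\hat{B}_1)\leq 1$ for conformal Gauss maps and the existence of strongly conformally harmonic maps attached to no Willmore surface), which is the actual reason the paper performs the explicit decomposition. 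Three small repairs: the sentence asserting that the columns have the prescribed form ``exactly when'' $\hat{B}_1^tI_{1,3}a=0$ overstates matters (orthogonality to $a$ alone does not yield \eqref{eq-potential-r}--\eqref{eq-potential-h}); your final paragraph supplies the missing half, but only the forward implication is immediate. The identity $\hat{B}_1^tI_{1,3}\hat{B}_1=0$ for the \emph{normalized potential} is not the definition of strong conformality and should be quoted from Theorem 4.24 of \cite{DoWa1}. Finally, the rational parametrization of the null conic omits one point, so the case $g_0\equiv\infty$ (resp.\ $h_0\equiv\infty$), like the reflection interchanging the two null families in Case (1), must be absorbed into the allowed conjugation by $T$.
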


The proof of Case (1) of this is more elaborate than the proof for the other cases. It also has a more general background.  The classical theorem of Bryant \cite{Bryant1984} tells us that every Willmore two-sphere in $S^3$ is M\"{o}bius equivalent to some minimal surface with embedded planer ends in $\R^3$. However, when the co-dimension increases, this does no longer hold true and there are Willmore two-spheres in $S^4$ different  from minimal surfaces in $\R^4$ \cite{Ejiri1988}. In \cite{Wang-1}, by using the loop  group methods developed in \cite{DoWa1}, \cite{DoWa2}, we provide a classification of Willmore two-spheres in $S^{n+2}$ via the \emph { normalized potentials} of their harmonic conformal Gauss maps. The basic idea is to combine the work of Burstall and Guest \cite{BuGu}, \cite{Gu2002} with the DPW method  \cite{DoWa2} and to characterize harmonic conformal Gauss maps of Willmore surfaces by describing their normalized  potentials. We would like to point out that these potentials take values in nilpotent Lie sub-algebras by the results of \cite{BuGu} and \cite{DoWa2}. In \cite{Wang-1} it was shown that there are  $m-2$ different types of such nilpotent Lie sub-algebras  when $n+4=2m$. The possible forms of normalized potentials are also listed explicitly.

 To derive concrete expressions for Willmore surfaces, and to understand their geometric properties one needs to perform Iwasawa decompositions of the meromorphic frames of the given normalized potentials. Since the potentials take values in some nilpotent Lie sub-algebra, the meromorphic frames, i.e.,  ODE solutions with the normalized potentials as coefficient matrices, are Laurant polynomials in $\lambda\in S^1$ \cite{BuGu}, \cite{Gu2002}, \cite{DoWa2}. A theoretical procedure  for the Iwasawa decompositions of such algebraic elements in a loop group  has been presented in \cite{CG}. However,  for the first type of normalized potentials in the classification theorem of \cite{Wang-1}, the Iwasawa decomposition can be obtained in an easier and more straightforward way.

 To be concrete, assume that the normalized potential is of the form (See Section 3 of \cite{DoWa1}, Section 2 of \cite{Wang-1} for the definitions and notations)
 \begin{equation}\label{eq-min-np-0}\eta=\lambda^{-1}\eta_{-1}dz
                    \end{equation}
                    with
                    \begin{equation}\label{eq-min-np} \eta_{-1}=\left(
                      \begin{array}{cc}
                        0 & \hat{B}_1 \\
                        -\hat{B}_1^tI_{1,3} & 0 \\
                      \end{array}
                    \right),~~ \hbox{ and }\  \hat{B}_1=
                                        \left(
                                          \begin{array}{ccccc}
                                            f_{11} & f_{12}&\ldots & f_{m-2,1} & f_{m-2,2} \\
                                            f_{11} & f_{12}&\ldots & f_{m-2,1} & f_{m-2,2} \\
                                            f_{13} & f_{14}&\ldots & f_{m-2,3} & f_{m-2,4} \\
                                            if_{13} & if_{14}&\ldots & if_{m-2,3} & if_{m-2,4} \\
                                          \end{array}
                                        \right),
                                        \end{equation}
where $f_{ij}$ are meromorphic functions on the Riemann surface $\tilde{M}$. Note that this $\eta$ is conjugate to the one in \eqref{eq-w-minimal} by $\tilde{T}=\hbox{diag}(1,-1,1,\ldots, 1)$. So the corresponding harmonic maps are M\"{o}bius equivalent to each other. Moreover,  we have the following theorem, which is in fact part of (1) of Theorem \ref{th-mini-spaceform}.
\begin{theorem}\label{th-minimal} Let $\mathcal{F}: \mathbb{D}\rightarrow SO^+(1,2m-1)/SO^+(1,3)\times SO(2m-4) $ be a strongly confomally harmonic map
 with its normalized potential being of the form in \eqref{eq-min-np}. Then $\mathcal{F}$ contains a constant light-like vector.

 Moreover, if $\mathcal{F}$ is the conformal Gauss map of a strong Willmore map $y: \mathbb{D}\rightarrow S^{2m-2}$, then $rank(\hat{B}_1)\leq 1$ and $y$ is M\"{o}bius equivalent to a minimal surface in $\mathbb{R}^{2m-2}$.
\end{theorem}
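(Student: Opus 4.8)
The plan is to read off the constant lightlike vector directly from the rigid structure of $\hat B_1$. Set $a_1=(1,1,0,0)^t$ and $a_2=(0,0,1,i)^t$ in $\C^4$, and use the same letters for the corresponding vectors in $\C^{2m}$ obtained by appending $2m-4$ zeros; the ambient bilinear form is $J=\mathrm{diag}(I_{1,3},I_{2m-4})$. The first point to establish is that every column $\mathbf v_j$ of $\hat B_1$ lies in the plane $W:=\mathrm{span}_{\C}\{a_1,a_2\}\subset\C^4$, that $W$ is totally isotropic for $I_{1,3}$, and that $W^{\perp_{I_{1,3}}}=W$ inside $\C^4$. Three consequences follow by inspection: (i) $\hat B_1^tI_{1,3}\hat B_1=0$, so $\mathcal F$ is automatically strongly conformal, as the statement presumes; (ii) $\hat B_1^tI_{1,3}a_1=\hat B_1^tI_{1,3}a_2=0$, which is exactly the assertion $\eta_{-1}a_1=\eta_{-1}a_2=0$; and (iii) using (i) repeatedly, the meromorphic frame $C(z,\lambda)$ determined by $C^{-1}dC=\eta$, $C(z_0,\lambda)=I$, is a Laurent polynomial $C=I+\lambda^{-1}C_1(z)+\lambda^{-2}C_2(z)$ of degree $2$ in $\lambda^{-1}$ (with $C_2$ block-diagonal), which records that $\mathcal F$ is of finite uniton type with uniton number $\le 2$.

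Next I would pass from $\eta$ to the geometry of $\mathcal F$. From $\eta=\lambda^{-1}\eta_{-1}\,dz$ and (ii) one gets $d(Ca_i)=C\,\eta_{-1}a_i\,\lambda^{-1}dz=0$, so $C(z,\lambda)a_i=a_i$ for all $z$ and $\lambda$. Perform the Iwasawa splitting $C=FV_+$, with $F$ the twisted real extended frame of $\mathcal F$ and $V_+$ the positive factor, $V_+(z,0)\in K^{\C}$. From $Ca_1=a_1$ we obtain $F(z,\lambda)^{-1}a_1=V_+(z,\lambda)a_1$, which is holomorphic in $\lambda$ across $\lambda=0$; since $a_1$ is real, the reality condition $\overline{F(z,1/\bar\lambda)}=F(z,\lambda)$ forces this loop to be constant in $\lambda$, hence equal to $V_+(z,0)a_1$, which lies in $\mathrm{span}\{e_1,\dots,e_4\}$ because $V_+(z,0)$ is block diagonal in the $(4)+(2m-4)$ splitting and $a_1$ lies in the first block. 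Therefore $a_1\in F(z,1)\cdot\mathrm{span}\{e_1,\dots,e_4\}=\mathcal F(z)$ for all $z$, and since $a_1$ is a fixed lightlike vector, $\mathcal F$ contains a constant lightlike vector (the identical computation shows the same for every member of the associated family).

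It remains to treat the case where $\mathcal F$ is the conformal Gauss map of a strong Willmore map $y:\D\to S^{2m-2}$. That $y$ is Möbius equivalent to a minimal surface in $\R^{2m-2}$ is now immediate from the first part together with Lemma 1.2 (1). For the bound $\mathrm{rank}(\hat B_1)\le 1$ I would proceed as follows: after applying a Möbius transformation we may assume that $\mathcal F$ is genuinely the conformal Gauss map of a minimal immersion $x:\D\to\R^{2m-2}$ composed with inverse stereographic projection, with the constant lightlike vector normalized to the standard one; computing the normalized potential of such a conformal Gauss map by Wu's formula, starting from the canonical light-cone lift of $y$, which is built from $x$ whose only holomorphic datum is $\partial_z x\in\C^{2m-2}$, one finds that the off-diagonal block of that potential is a single column vector times a row of meromorphic functions, hence has rank $\le 1$; as the normalized potential of $\mathcal F$ at a fixed base point is unique up to a constant gauge in $K^{\C}$, which does not change $\mathrm{rank}(\hat B_1)$, the bound follows, and the resulting rank-one potential is exactly of the form \eqref{eq-potential-r} up to the conjugation $\tilde T=\mathrm{diag}(1,-1,1,\dots,1)$. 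The delicate step — and the main obstacle — is precisely this last point: one has to make sure the base-point and gauge normalizations under which $\eta$ was brought to the form \eqref{eq-min-np} are compatible with those produced by $x$, so that ``rank of the off-diagonal block'' is a genuine invariant here. Alternatively one avoids this by invoking the characterization in Theorem 3.10 of \cite{DoWa1} of which strongly conformally harmonic maps containing a constant lightlike vector occur as conformal Gauss maps of Willmore surfaces (namely exactly those coming from minimal surfaces in $\R^{2m-2}$), and checking, against the M-C data reconstructed from \eqref{eq-min-np} via Wu's formula, that this happens only when $\mathrm{rank}(\hat B_1)\le 1$.
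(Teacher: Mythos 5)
Your argument for the first assertion is correct and takes a genuinely different route from the paper's. You observe that every column of $\hat B_1$ in \eqref{eq-min-np} lies in the fixed isotropic plane $W=\mathrm{span}_{\C}\{(1,1,0,0)^t,(0,0,1,i)^t\}$, so $\eta_{-1}a_1=0$ for the real lightlike vector $a_1=(1,1,0,\ldots,0)^t$ and the meromorphic frame fixes $a_1$; after the Iwasawa splitting $C=FV_+$ the loop $F^{-1}a_1=V_+a_1$ contains only nonnegative powers of $\lambda$, and the reality of $F$ on $S^1$ together with the realness of $a_1$ kills the positive Fourier modes, so $F(\lambda)^{-1}a_1=V_+(z,0)a_1$ is a constant real vector in $\mathrm{span}\{e_1,\ldots,e_4\}$, i.e.\ $a_1\in\mathcal F(z)$. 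This is clean and bypasses the paper's route entirely: the paper transforms the potential into upper-triangular nilpotent form via the isometry $\mathcal P$ (Lemma \ref{lemma-mini1}), integrates it, carries out the Iwasawa decomposition explicitly (Lemma \ref{lemma-mini3}, proved in Section 4), and only then reads off the constant lightlike vector as $\tfrac{1}{\phi_{01}}(\phi_1+\phi_2)$ from the resulting Maurer--Cartan form (Lemma \ref{lemma-mini5}). Your mechanism is essentially the one the paper itself uses for the converse directions of Cases (2) and (3) of Theorem \ref{th-mini-spaceform}, and it also yields the finite uniton statement since $\eta_{-1}(z_1)\eta_{-1}(z_2)\eta_{-1}(z_3)=0$.

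The second assertion, $\mathrm{rank}(\hat B_1)\le 1$, is however not established by your proposal: both routes you sketch stop exactly where the work begins, as you yourself concede. Route (a) needs the claim that the normalized potential of the conformal Gauss map of a minimal immersion $x:\D\to\R^{2m-2}$ has off-diagonal block of rank at most one. Granting that this rank is a well-defined invariant (it is, since the normalized potential at a fixed base point is unique up to conjugation by a constant block-diagonal element of $K^{\C}$), the computation is not a triviality: Wu's formula extracts the holomorphic part of the Maurer--Cartan coefficients and then conjugates by the solutions $F_{01},F_{02}$ of the associated holomorphic ODEs, and taking holomorphic parts of a matrix-valued function does not in general preserve its rank, so ``the only holomorphic datum is $\partial_z x$'' is not by itself an argument. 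Route (b) --- reconstructing the real frame's Maurer--Cartan data from \eqref{eq-min-np} and testing it against the criterion of Theorem 3.10 of \cite{DoWa1} --- is precisely what Lemmas \ref{lemma-mini3}--\ref{lemma-mini5} and Section 4 of the paper carry out; deferring this to ``checking that this happens only when $\mathrm{rank}(\hat B_1)\le 1$'' leaves the actual content of the second assertion (and of Corollary \ref{cor-min}) unproved. The final deduction, that $y$ is M\"obius equivalent to a minimal surface in $\R^{2m-2}$ via the first part and Lemma 1.2(1), is fine.
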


It is easy to verify that such $\mathcal{F}$ is of finite uniton type. Moreover, $\mathcal{F}$ actually belongs to one of the simplest cases, called $S^1-invariant$ (See \cite{BuGu}, \cite{Do-Es}, \cite{Wang-1}). For such harmonic maps, by using a straightforward and lengthy computation, one can derive the harmonic map explicitly and then read off all needed information, which will provide a  proof of Theorem \ref{th-minimal}.

\begin{corollary}\label{cor-min} Let $\mathcal{F}: \mathbb{D}\rightarrow SO^+(1,n+3)/SO^+(1,3)\times SO(n)$ be a strongly conformally harmonic map
with its normalized potential $\eta$ of the form \eqref{eq-w-minimal} and of maximal $rank(\hat B_1)=2$. Then $\mathcal{F}$ can not be
the conformal Gauss map of a Willmore surface. In particular, there exist conformally harmonic maps which are not related to any Willmore map.
\end{corollary}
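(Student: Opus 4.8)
The plan is to obtain Corollary~\ref{cor-min} as the contrapositive of Theorem~\ref{th-minimal}, together with a one-line existence argument.

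First I would use the conjugation already recorded before Theorem~\ref{th-minimal}: the matrix $\tilde T=\mathrm{diag}(1,-1,1,\dots,1)\in O^+(1,3)\times O(n)\subset O^+(1,n+3)$ conjugates the potential \eqref{eq-w-minimal} into one of the standard form \eqref{eq-min-np}, and this conjugation acts on $\hat B_1$ by the invertible left factor $\mathrm{diag}(1,-1,1,1)$, so it preserves $\mathrm{rank}(\hat B_1)$; on the level of harmonic maps it merely composes $\mathcal F$ with the fixed M\"obius transformation attached to $\tilde T$. Hence $\mathcal F$ is the conformal Gauss map of a Willmore immersion into $S^{n+2}$ if and only if the conjugated map is, and we may freely assume that $\mathcal F$ has a potential of the form \eqref{eq-min-np}. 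The rank statement in Theorem~\ref{th-minimal} is phrased for $n+4$ even, but it is a property of the $S^1$-invariant harmonic map $\mathcal F$ itself, proved by the explicit computation mentioned there, and the same computation goes through for arbitrary $n$.

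Now suppose, towards a contradiction, that $\mathcal F$ is the conformal Gauss map of some Willmore surface $y\colon\mathbb D\to S^{n+2}$. By Theorem~\ref{th-minimal}, $\mathcal F$ contains a constant lightlike vector, so by the lemma of H\'elein and Xia--Shen quoted above $y$ is M\"obius equivalent to a minimal surface in $\R^{n+2}$; in particular $y$ is a strong Willmore map, and the second half of Theorem~\ref{th-minimal} then forces $\mathrm{rank}(\hat B_1)\le 1$, contradicting the hypothesis $\mathrm{rank}(\hat B_1)=2$. Therefore no such $y$ exists, which is the main assertion. For the last sentence of the corollary it suffices to exhibit one admissible datum: take $\hat B_1$ with first column $(1,-1,0,0)^t$, second column $(0,0,z,iz)^t$, and all remaining columns zero (which requires $n\ge 2$); then $\hat B_1^tI_{1,3}\hat B_1=0$ holds trivially, so \eqref{eq-w-minimal} is the normalized potential of a genuine strongly conformally harmonic map $\mathcal F$ produced by the DPW construction, and $\mathrm{rank}(\hat B_1)=2$; by the above, this $\mathcal F$ is not the conformal Gauss map of any Willmore surface.

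The one point requiring care is the identification of the Willmore surface $y$ above as a \emph{strong} Willmore map, which is what makes the rank statement of Theorem~\ref{th-minimal} applicable; this is harmless here, since a surface M\"obius equivalent to a minimal surface in $\R^{n+2}$ is automatically of that type, but if one preferred to avoid it, the alternative would be to run the explicit $S^1$-invariant computation behind Theorem~\ref{th-minimal} for a general rank-$2$ matrix $\hat B_1$ and to check directly from the resulting closed formula that the conformal Gauss map normalization cannot be satisfied --- that is the only lengthy, though entirely routine, calculation involved.
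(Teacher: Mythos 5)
Your argument is correct and is essentially the paper's own: the paper simply remarks that the corollary follows from the lemmas proving Theorem~\ref{th-minimal}, i.e.\ from the contrapositive of its ``Moreover'' clause ($\mathcal{F}$ a conformal Gauss map $\Rightarrow \mathrm{rank}(\hat B_1)\le 1$), which is exactly the deduction you spell out, with the conjugation by $\tilde T$ and the strong-Willmore technicality handled as the paper implicitly does. Your explicit rank-$2$ example for the existence statement is a harmless addition not present in the paper.
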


As a consequence, at this point we have obtained a complete description of the strongly conformally harmonic maps which produce either minimal surfaces in $\R^{n+2}$ or no Willmore surfaces at all, which make our theory workable for the study of non-Euclidean-minimal Willmore surfaces. This corollary shows that the characterization theorems here do make sense for the theory in \cite{DoWa1} to deal with global Willmore surfaces different from minimal surfaces in $\R^{n+2}$.\\

This paper is organized as follows. Section 2 provides the form of the potentials of strongly conformally harmonic maps containing a constant lightlike vector. The proofs of Cases (2) and (3) of Theorem \ref{th-mini-spaceform} are also derived in this section. Section 3 contains the characterizations of minimal surfaces in $\R^{n+2}$ in terms of potentials and several technical lemmas providing a proof of our main theorem. The proofs of these technical lemmas are derived in Section 4.

For simplicity we will, in this paper,  always retain the notation of  \cite{DoWa1} and \cite{Wang-1}. For more details we also refer to  \cite{DoWa1} and \cite{Wang-1}.

\section{Potentials of strongly conformally harmonic maps containing a constant lightlike vector}

This section is to derive the forms of the normalized potentials of strongly conformally harmonic maps containing a non-zero constant real vector. The basic idea is to characterize the Maurer-Cartan form of such a strongly conformally harmonic map $\mathcal{F}:  \mathbb{D}\rightarrow SO^+(1,n+3)/SO^+(1,3)\times SO(n)$.
\subsection{Proof of Theorem \ref{th-potential-light}}
The proof of  Theorem \ref{th-potential-light}
relies on the following technical lemma.
\begin{lemma}\label{lemma-ode} Let $$A_1=\left(
        \begin{array}{cccc}
          0 & 0 & a_{13} & a_{14} \\
          0 & 0 & -a_{13} & -a_{14} \\
          a_{13} & a_{13} & 0 & a_{34} \\
          a_{14} & a_{14} & -a_{34} & 0 \\
        \end{array}
      \right)$$
      be a holomorphic matrix function on a contractible open Riemann surface $U$.
Let $F_{01}$ be a solution to the equation
 \begin{equation*}F_{01}^{-1}dF_{01}=A_{1}dz,\ ~~~~  F_{01}|_{z=0}=I_4.\end{equation*}
 Then  \[ F_{01}=\left(
              \begin{array}{cccc}
                1+\frac{1}{2}(b_{13}^2+b_{14}^2) & \frac{1}{2}(b_{13}^2+b_{14}^2)  & b_{13} & b_{14} \\
                 -\frac{1}{2}(b_{13}^2+b_{14}^2) & 1-\frac{1}{2}(b_{13}^2+b_{14}^2)  & -b_{13} & -b_{14} \\
                 b_{13}  &  b_{13}   & 1 & 0 \\
                 b_{14}&  b_{14}   & 0 & 1 \\
              \end{array}
            \right)\left(
                          \begin{array}{cccc}
                            1 &   &   &  \\
                              & 1 &  &   \\
                              &  &\cos \varphi & \sin\varphi \\
                              &  & -\sin\varphi &\cos \varphi \\
                          \end{array}
                        \right)\]
with
\[\varphi =\int_0^z  a_{34} \ dw \]
and
 \[b_{13} =\int_0^z (a_{13}\cos \varphi+a_{14}\sin \varphi ) dz ,\    ~ \ b_{14} = \int_0^z (-a_{13}\sin \varphi+a_{14}\cos \varphi) dz.\]
\end{lemma}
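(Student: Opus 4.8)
The plan is to verify directly that the explicit matrix product on the right-hand side satisfies the linear ODE $F_{01}^{-1}dF_{01}=A_1 dz$ with the correct initial condition, rather than attempting to solve the system from scratch. Call the first factor $P(z)$ (the unipotent-looking matrix built from $b_{13},b_{14}$) and the second factor $R(z)=\mathrm{diag}(1,1,R_\varphi)$, where $R_\varphi$ is the planar rotation by $\varphi$. Then $F_{01}=PR$, so $F_{01}^{-1}dF_{01}=R^{-1}(P^{-1}dP)R+R^{-1}dR$, and the goal is to show this equals $A_1 dz$. First I would record the elementary facts: $R^{-1}dR=\mathrm{diag}\!\left(0,0,\begin{psmallmatrix}0&1\\-1&0\end{psmallmatrix}\right)d\varphi$ with $d\varphi=a_{34}dz$, which already produces the $a_{34}$ entries in the $(3,4)$-block of $A_1$; and $R^{-1}$ conjugation acts only on the last two coordinates.

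Next I would compute $P^{-1}dP$. Observe that $P$ has the block structure $\begin{psmallmatrix} I_2+\tfrac12 vv^tJ & v\\ v^tJ' & I_2\end{psmallmatrix}$-type form, where the relevant vectors are $(b_{13},b_{14})$; more concretely one checks $P^{-1}$ is obtained by replacing $b_{13}\mapsto -b_{13}$, $b_{14}\mapsto -b_{14}$ (since $P$ is exponential-like: $P=\exp(N)$ for the nilpotent $N$ with entries linear in $b_{13},b_{14}$, and $N^3=0$). Granting that, $P^{-1}dP=dN=$ the matrix with $db_{13},db_{14}$ in the pattern matching the $a_{13},a_{14}$ slots of $A_1$ but with the rotation not yet applied. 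Here $db_{13}=(a_{13}\cos\varphi+a_{14}\sin\varphi)dz$ and $db_{14}=(-a_{13}\sin\varphi+a_{14}\cos\varphi)dz$, i.e. $\begin{psmallmatrix}db_{13}\\db_{14}\end{psmallmatrix}=R_\varphi^{-t}\begin{psmallmatrix}a_{13}\\a_{14}\end{psmallmatrix}dz$ — precisely the combination that, after conjugating the off-diagonal blocks of $dN$ by $R=\mathrm{diag}(I_2,R_\varphi)$, returns the plain entries $a_{13},a_{14}$ in $A_1$. Then I would assemble $R^{-1}(P^{-1}dP)R+R^{-1}dR$ block by block and check it equals $A_1dz$ entry-by-entry: the $(1,3),(1,4),(2,3),(2,4)$ and their transposed partners give the $a_{13},a_{14}$ terms; the $(3,4),(4,3)$ entries give $a_{34}$; the $(1,1),(1,2),(2,1),(2,2),(3,3),(4,4)$ entries vanish. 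Finally, at $z=0$ we have $\varphi=b_{13}=b_{14}=0$, so $P=I_4$, $R=I_4$, giving $F_{01}|_{z=0}=I_4$, and uniqueness of solutions to the linear ODE finishes the argument.

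The main obstacle I anticipate is purely bookkeeping: getting the conjugation by $R_\varphi$ to interact correctly with the sign conventions so that the $\varphi$-dependence cancels exactly, leaving $A_1$ with its stated constant-in-$\varphi$ entries. In particular one must be careful that the \emph{same} rotation $R_\varphi$ that is stripped off by the conjugation is the one hidden inside the definitions of $b_{13},b_{14}$; the identity $\frac{d}{dz}(R_\varphi^{-t}(a_{13},a_{14})^t)$ is \emph{not} simply $R_\varphi^{-t}(a_{13}',a_{14}')^t$, so one cannot differentiate $b_{13},b_{14}$ naively — rather one uses that by definition $b_{13}'=a_{13}\cos\varphi+a_{14}\sin\varphi$ directly. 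An alternative, perhaps cleaner, route is to \emph{derive} rather than verify: set $F_{01}=PR$ as an ansatz with $R=\mathrm{diag}(I_2,R_\varphi)$ of unknown angle and $P$ unipotent of the indicated shape, plug into $F_{01}'=F_{01}A_1$, and read off first the equation forcing $\varphi'=a_{34}$ from the $(3,4)$-block, then the equations $b_{13}'=a_{13}\cos\varphi+a_{14}\sin\varphi$, $b_{14}'=-a_{13}\sin\varphi+a_{14}\cos\varphi$ from the first-column blocks, checking at the end that the remaining blocks are automatically consistent (this consistency is where the special form of $A_1$ — the repeated rows/columns and the $\pm$ signs making $A_1$ lie in the relevant nilpotent-plus-rotation subalgebra — gets used). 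I would present the verification version for brevity, relegating the block computation of $P^{-1}dP$ to a one-line remark that $P=\exp N$ with $N^3=0$.
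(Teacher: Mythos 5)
Your proposal is correct and follows essentially the same route as the paper: the paper also writes $F_{01}=\hat F_{01}\tilde F_{01}$ (your $P\cdot R$), computes $\tilde F_{01}^{-1}d\tilde F_{01}$ and $\hat F_{01}^{-1}d\hat F_{01}$ separately, and checks that $\tilde F_{01}^{-1}(\hat F_{01}^{-1}d\hat F_{01})\tilde F_{01}+\tilde F_{01}^{-1}d\tilde F_{01}=A_1\,dz$. Your only addition is the observation $P=\exp N$ with $N^3=0$ (which, together with $[N,dN]=0$, gives $P^{-1}dP=dN$ cleanly), where the paper simply verifies $\hat F_{01}^{-1}d\hat F_{01}$ by direct computation.
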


\begin{proof} Set \[\tilde{F}_{01}=\left(
                          \begin{array}{cccc}
                            1 &   &   &  \\
                              & 1 &  &   \\
                              &  &\cos \varphi & \sin\varphi \\
                              &  & -\sin\varphi &\cos \varphi \\
                          \end{array}
                        \right),\]
and
\[       \hat{F}_{01}=\left(
              \begin{array}{cccc}
                1+\frac{1}{2}(b_{13}^2+b_{14}^2) & \frac{1}{2}(b_{13}^2+b_{14}^2)  & b_{13} & b_{14} \\
                 -\frac{1}{2}(b_{13}^2+b_{14}^2) & 1-\frac{1}{2}(b_{13}^2+b_{14}^2)  & -b_{13} & -b_{14} \\
                 b_{13}  &  b_{13}   & 1 & 0 \\
                 b_{14}&  b_{14}   & 0 & 1 \\
              \end{array}
            \right).\]
             Straightforward computations yield
\[\tilde{F}_{01}^{-1} d\tilde{F}_{01}=\left(
        \begin{array}{cccc}
          0 & 0 & 0 & 0 \\
          0 & 0 & 0 & 0 \\
          0 & 0 & 0 & a_{34} \\
          0 & 0 & -a_{34} & 0 \\
        \end{array}
      \right)dz,\]
      and
\[\hat{F}_{01}^{-1}d\hat{F}_{01}=\left(
              \begin{array}{cccc}
               0 & 0 & b_{13}' & b_{14}' \\
                0 &  0 & -b_{13}' & -b_{14}' \\
                 b_{13}'  &  b_{13}'   & 0 & 0 \\
                 b_{14}'  &  b_{14}'   & 0 & 0 \\
              \end{array}
            \right)dz,\]
   with
   \[b_{13}'=a_{13}\cos \varphi+a_{14}\sin \varphi,\ b_{14}'=-a_{13}\sin \varphi+a_{14}\cos \varphi.\]
Moreover,  one obtains
   \begin{equation*} \tilde{F}_{01}^{-1}\left(\hat{F}_{01}^{-1}d\hat{F}_{01}\right)\tilde{F}_{01}= \left(
        \begin{array}{cccc}
          0 & 0 & a_{13} & a_{14} \\
          0 & 0 & -a_{13} & -a_{14} \\
          a_{13} & a_{13} & 0 &0 \\
          a_{14} & a_{14} & 0& 0 \\
        \end{array}
      \right).
         \end{equation*}
Since $F_{01}=\hat{F}_{01}\tilde{F}_{01}$, one derives
\[F_{01}^{-1}F_{01z}=\tilde{F}_{01}^{-1}\left(\hat{F}_{01}^{-1}d\hat{F}_{01}\right)\tilde{F}_{01}+\tilde{F}_{01}^{-1}\tilde{F}_{01z}=A_1.\]
\end{proof}
\leftline{\em Proof of Theorem \ref{th-potential-light}:}\vspace{1mm}
Let $F(z,\bar{z},\lambda)=(\phi_1,\phi_{2},\phi_3,\phi_4,\psi_1,\ldots,\psi_n)$ be a frame of $f$ with the initial condition $F(0,0,\lambda)=I_{n+4}$. W.l.g, we may assume that \[Y_0=\phi_1-\phi_{2}\]
is the constant lightlike vector contained in $f$. As a consequence, we derive
\[\phi_{1z}=\phi_{2z}=a_{13}\phi_3+a_{14}\phi_4+\sqrt{2}\sum_{j=1}^{n}\beta_j\psi_j.\]
That is
\[(\phi_{1z},\phi_{2z})^t=
 \left(
                                 \begin{array}{ccccccc}
          0 & 0  & a_{13}& a_{14} & \sqrt{2}\beta_1 & \ldots & \sqrt{2}\beta_n  \\
          0 & 0  & a_{13}& a_{14} & \sqrt{2}\beta_1 & \ldots & \sqrt{2}\beta_n  \\
          \end{array}
                               \right)\cdot F^t.
\]
Comparing with
\[ F^{-1}F_z=\left(
                   \begin{array}{cc}
                     A_1 & B_1 \\
                     -B_1^tI_{1,3} & A_2 \\
                   \end{array}
                 \right),\]
we obtain
\[A_1=\left(
        \begin{array}{cccc}
          0 & 0 & a_{13} & a_{14} \\
          0 & 0 & -a_{13} & -a_{14} \\
          a_{13} & a_{13} & 0 & a_{34} \\
          a_{14} & a_{14} & -a_{34} & 0 \\
        \end{array}
      \right),\]
      and
      \[ \ B_1= \left(
      \begin{array}{ccc}
        \sqrt{2}\beta_1 & \ldots &\sqrt{ 2}\beta_n \\
        -\sqrt{2}\beta_1 & \ldots &  -\sqrt{ 2}\beta_n \\
        -k_1 & \ldots & -k_n \\
        -\hat{k}_1 & \ldots & -\hat{k}_n \\
      \end{array}
    \right).\]
Since $B_1^tI_{1,3}B_1=0$, \[\hat k_1=i k_1,\ldots, \hat k_n=i k_n, \hbox{ or } \hat k_1=-i k_1,\ldots, \hat k_n=-i k_n.\]
Similar to the discussion in Lemma 3.8 of Section 3 of \cite{DoWa1}, without loss of generality, we assume that on $\tilde{M}$,  $\hat k_1=i k_1,\ldots, \hat k_n=i k_n$.

For the computation of the normalized potential, we will apply Wu's formula (Theorem 4.23, Section 4.3 of \cite{DoWa1}, see also \cite{Wu}).
Let  $\delta_{1}=(\tilde{a}_{ij})$ denote the ``holomorphic part" of $ A_{1}$ with respect to the base point $z=0$, i.e., the part of the Taylor expansion of $A_1$ which is independent of $\bar{z}$.
Let $F_{01}$ be a solution to the equation
 \begin{equation}\label{F-01}F_{01}^{-1}dF_{01}=\delta_{1}dz,\  F_{01}|_{z=0}=I_4.\end{equation}
By Lemma \ref{lemma-ode}, $F_{01}$ is equal to
\[\left(
              \begin{array}{cccc}
                1+\frac{1}{2}(b_{13}^2+\hat{a}_{14}^2) & \frac{1}{2}(b_{13}^2+\hat{a}_{14}^2)  & b_{13}\cos \varphi- b_{14}  \sin\varphi &  b_{13}\sin \varphi + b_{14}  \cos\varphi \\
                 -\frac{1}{2}(b_{13}^2+b_{14}^2) & 1-\frac{1}{2}(b_{13}^2+b_{14}^2) & -b_{13}\cos \varphi+ b_{14}  \sin\varphi &  -(b_{13}\sin \varphi + b_{14}  \cos\varphi) \\
                 b_{13}  &  b_{13}   &\cos \varphi & \sin\varphi \\
                 b_{14}& b_{14}   &-\sin\varphi &\cos \varphi \\
              \end{array}
            \right) \]
with
\[\varphi =\int_0^z  \tilde{a}_{34} \ dz \]
and
\[ \ b_{13} =\int_0^z (\tilde{a}_{13}\cos \varphi+\tilde{a}_{14}\sin \varphi ) dz ,\  b_{14} = \int_0^z (-\tilde{a}_{13}\sin \varphi+\tilde{a}_{14}\cos \varphi) dz.\]
Let  $\delta_{2}$ denote the ``holomorphic part" of $ A_{2},$ with respect to the base point $z=0$, and let $F_{02}$ be a solution to the equation \[F_{02}^{-1}dF_{02}=\delta_{2}dz,\  F_{02}|_{z=0}=I_n.\] Let $\tilde{B}_1$ denote the holomorphic part of $B_1$. By Wu's formula (Theorem 4.23 of \cite{DoWa1}), the normalized potential can be represented in the form
 \begin{equation*}
 \begin{split}
\eta&=\lambda^{-1}\left(
                     \begin{array}{cc}
                       F_{01} & 0 \\
                       0 & F_{02} \\
                     \end{array}
                   \right)
\left(
                     \begin{array}{cc}
                       0 & \tilde{B}_1 \\
                       -\tilde{B}^{t}_1I_{1,3} & 0 \\
                     \end{array}
                   \right)\left(
                     \begin{array}{cc}
                       F_{01} & 0 \\
                       0 & F_{02} \\
                     \end{array}
                   \right)^{-1}dz\\
&=\lambda^{-1}\left(
                     \begin{array}{cc}
                       0 & \hat{B}_1 \\
                       -\hat{B}^{t}_1I_{1,3} & 0 \\
                     \end{array}
                   \right)dz,
                   \end{split}\end{equation*}
with             \begin{equation*}
\begin{split}            \hat{B}_{1} &=F_{01}\tilde{B}_{1}F_{02}^{-1} \\
&=F_{01}\cdot\left(
                                                                       \begin{array}{cccc}
                                                                         \tilde{f}_{11} & \tilde{f}_{12} & \ldots & \tilde{f}_{1n} \\
                                                                         -\tilde{f}_{11} & -\tilde{f}_{12} & \ldots & -\tilde{f}_{1n} \\
                                                                         -\tilde{f}_{31} & -\tilde{f}_{32} & \ldots & -\tilde{f}_{3n} \\
                                                                         -i\tilde{f}_{31} & -i\tilde{f}_{32} & \ldots & -i\tilde{f}_{3n} \\
                                                                       \end{array}
                                                                     \right)\cdot F_{02}^{-1}\\
                                                                      &=\left(
                                                                       \begin{array}{cccc}
                                                                         \hat{f}_{11} & \hat{f}_{12} & \ldots & \hat{f}_{1n} \\
                                                                         -\hat{f}_{11} & -\hat{f}_{12} & \ldots & -\hat{f}_{1n} \\
                                                                         -\hat{f}_{31} & -\hat{f}_{32} & \ldots & -\hat{f}_{3n} \\
                                                                         -i\hat{f}_{31} & -i\hat{f}_{32} & \ldots & -i\hat{f}_{3n} \\
                                                                       \end{array}
                                                                     \right).
\end{split}\end{equation*}

The statement that $\mathcal{F}$ is of finite uniton type with maximal uniton number $\leq 2$ comes from Lemma \ref{lemma-mini1} in Section 3.2.
\rightline{$\Box$}

\subsection{Proof of Theorem \ref{th-mini-spaceform}}

\ \\ \hspace{5mm}
{\em Proof of Theorem \ref{th-mini-spaceform}:}

Case (1) comes from Theorem \ref{th-potential-light} and Theorem \ref{th-minimal}.

Now we consider Case (2). Since $\mathcal{F}$ contains a constant timelike vector $e_0$. We can assume $|e_0|=1$ and it is time forward. Then there exists a transformation $T\in SO(1,n+3)$ transforming $e_0$ into $(1,0,\ldots,0)^t$ and transforming $\mathcal{F}$ into $T\mathcal{F}$. So without loss of generality, we assume $e_0=(1,0,\ldots,0)^t$. Let $F=(e_0,\hat{e}_0,e_1,e_2,\psi_1,\ldots,\psi_n)$ be a lift of $\mathcal{F}$. As a consequence, every entry of the first column and the first row of $\alpha=F^{-1}dF$ is zero.  The same holds for $F_{\lambda}$ when introducing the loop parameter into $F$.  Then using Wu's formula \cite{Wu} (see also Theorem 4.23, and Theorem 4.24 of \cite{DoWa1} for the Willmore case), we see that every entry of the first column and the first row of the normalized potential stays zero. Moreover, by Theorem 4.24, $\hat{B}_1$ also satisfies $\hat{B}_1^tI_{1,3}\hat{B}_1=0$, which yields \[\mathbf{v}_j^tI_{1,3}\mathbf{v}_l=0, ~~\hbox{ for all }~j,l=1,\ldots,n.\]
Formula \eqref{eq-potential-s} follows by a simple computation similar to the one in deriving the Weierstrass  representation of minimal surfaces in $\R^3$.

The converse part is straightforward. In fact, integrating $\eta$, we see that all the entries of the first column and the first row of $F_-$ are $0$, except the $(1,1)$-entry which is  $1$. Then performing an Iwasawa decomposition one observes that $F$ inherits the same property, that is, the harmonic map $\mathcal{F}$ contains $e_0=(1,0,\ldots,0)^t$ at every point.

Case (3) follows by a similar argument.

\rightline{$\Box$}

\section{Minimal surfaces in $\R^n$ as Willmore surfaces}

This and the next section aim to give by direct computations,  a concrete description of all Willmore surfaces corresponding to the first type of nilpotent Lie subalgebras in \cite{Wang-1}. To this end, since there are many lengthy and elementary computations, we  will divide the proof into  several technical lemmas, which will be stated in this section. The proofs of  these lemmas will be left to Section 4.

The basic idea in our computations is to express the normalized potentials by some strictly upper triangular matrix-valued 1-forms, since this will simply the computations substantially.  For this purpose, we need to transform the original group into a different one such that the matrix coefficients  of the potentials in \eqref{eq-min-np} will be upper   triangular matrices.  So we will first recall the Lie group isometry  in Section 3.1. Then we state five technical lemmas in Section 3.2.

\subsection{Preliminary}
To begin with, we first recall some basic notations and results. The detailed descriptions and proofs can be found in Section 3 of \cite{Wang-1}. We will retain the notation of \cite{Wang-1}.

Recall that $SO^+(1,n+3)=SO(1,n+3)_0$ is the connected subgroup of
\begin{equation*}SO(1,n+3):=\{ A\in Mat(n+4,\mathbb{R})\ |\ A^t I_{1,n+3}A=I_{1,n+3}, \det A=1\},
\end{equation*}
with
\[I_{1,n+3}=\hbox{diag}\{-1,1,\ldots,1\}.\]
The subgroup $K=SO^+(1,3)\times SO(n)$ is defined by the involution

 \begin{equation}\begin{array}{ll}
\sigma:    SO^+(1,n+3)&\rightarrow SO^+(1,n+3)\\
 \ \ \ \ \ \ \ A &\mapsto DAD^{-1},
\end{array}\end{equation}
where  $D=diag\{-I_4,I_n\}$.
 For simplicity, we assume that $n$ is even and $n+4=2m$.
We also have
\begin{equation}\label{eq-g}
G(n+4,\mathbb{C}):=\{A\in Mat(n+4,\mathbb{C})| A^tJ_{n+4}A=J_{n+4}, \det A=1
\},
\end{equation}
with  \[J_{n+4}=\left( j_{k,l}\right)_{(n+4)\times (n+4)}, \  j_{k,l}=\delta_{k+l,n+5} \hbox{ for all }1\leq k,l\leq n+4.\]
By Lemma 3.1 of \cite{Wang-1}, one obtains a Lie group isometry from $SO^+(1,2m-1,\C)$ into $G(2m,\C)$, defined by the following map
\begin{equation} \begin{array}{ccccc}
                   \mathcal{P}: &   SO^+(1,2m-1,\C)&  \rightarrow &  G(2m,\C)\\
              \     & A& \mapsto & \tilde{P}^{-1} A \tilde{P}\\
                 \end{array}
\end{equation}
with
\begin{equation}
\tilde{P}=\frac{1}{\sqrt{2}}\left(
    \begin{array}{cccccccc}
       1 &  &   &  &   &   &   & -1  \\
       1 &  &    &   &   &   & & 1   \\
        & -i &   &  &  &   &  i &   \\
       &  1&   &  &   &   &   1&   \\
       &  & \vdots  &  &     & \vdots   &   &   \\
       &  & \vdots  &  &     & \vdots    &   &   \\
       &  &    & -i & i  &    &   &   \\
       &  &   & 1 &  1 &    &   &   \\
    \end{array}
  \right).\end{equation}
  Under this isometry, we have that $\mathcal{P}\left(SO^+(1,2m-1)\right)$ is equal to the connected component of $\{F\in G(2m,\C)\ |\ F=S_{2m}^{-1}\bar{F}S_{2m}\}$
  containing $ I_{2m}$. Here
\begin{equation}  S_{2m}=\left(
        \begin{array}{ccccc}
          1 &   &   \\
           & J_{2m-2}  &    \\
           &   &    1\\
        \end{array}
      \right).
\end{equation}
Moreover, this induces  an involution of the loop group $\Lambda G(2m,\C)$:
 \begin{equation}\label{eq-def-tau} \begin{array}{ll}
\hat{\tau} :     \Lambda G(2m,\mathbb{C})&\rightarrow \Lambda G(2m,\mathbb{C})\\
 \ \ \ \ \ \ \ F   &\mapsto S_{2m}^{-1} \bar{F}S_{2m}
\end{array}\end{equation}
with $\mathcal{P}(\Lambda SO^+(1,2m-1))=\{F\in\Lambda G(2m,\mathbb{C})|
\hat{\tau}(F)=F\}$ as its fixed point set.
We also have that the image of $SO^+(1,3)\times SO(2m-4)$  under  $\mathcal{P}$ is of the form \begin{equation}\mathcal{P}\left((SO^+(1,3 )\times SO(2m-4))^{\C}\right)  =\{ F \in G(2m,\mathbb{C})\ |\ \ F=D_0^{-1}FD_0\ \}\end{equation}
with
\begin{equation}D_0=\tilde{P}^{-1}D \tilde{P}=\hbox{diag}\{-1,-1,I_{2m-4},-1,-1\}=\left(
                                 \begin{array}{ccccc}
                                   -1  &  &  & &\\
                                    & -1 &  & &\\
                                    &  & I_{2m-4} & & \\
                                    &  &   & -1 & \\
                                    &  &  &  & -1\\
                                 \end{array}
                               \right).
\end{equation}

\subsection{Technical Lemmas}
With the notations as above, we are able to state the  following lemmas:
\begin{lemma}\label{lemma-mini1} Let $\eta\in\Lambda^-\mathfrak{so}(1,n+3)_{\sigma}$ be the normalized potential defined on $\mathbb{D}$ in Theorem \ref{th-minimal}. Then
\[\mathcal{P}(\eta)=\lambda^{-1}\left(
        \begin{array}{ccc}
          0 & \tilde{f} & 0 \\
          0 & 0 & -\tilde{f}^{\sharp} \\
          0 & 0 & 0 \\
        \end{array}
      \right)dz,\ \hbox{ with } \tilde{f}\in Mat(2\times (2m-4),\C),\ \tilde{f}^{\sharp}:=J_{2m-4}\tilde{f}^tJ_2.\]
 \end{lemma}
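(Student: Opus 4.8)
The plan is to use that $\mathcal P$ is induced by the Lie group isomorphism $A\mapsto\tilde P^{-1}A\tilde P$, so that on Maurer--Cartan forms, and hence on normalized potentials, it acts simply by $\mathrm{Ad}(\tilde P^{-1})$; since $\tilde P$ is independent of $\lambda$, this immediately gives
\[
\mathcal P(\eta)=\lambda^{-1}\bigl(\tilde P^{-1}\eta_{-1}\tilde P\bigr)\,dz .
\]
It then remains to determine the shape of $X:=\tilde P^{-1}\eta_{-1}\tilde P$, and I would read it off from three facts: the form of $\tilde P$, the special shape of the columns of $\hat B_1$, and the identity $X^tJ_{2m}+J_{2m}X=0$, valid because $\mathcal P$ takes values in $\mathfrak g(2m,\mathbb C)$.

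First I would recall, from the explicit form of $\tilde P$ in Section 3.1 (equivalently Lemma 3.1 of \cite{Wang-1}), that $\tilde P$ carries $\operatorname{span}(e_1,e_2,e_{2m-1},e_{2m})$ onto the $SO^+(1,3)$-block $V_c:=\operatorname{span}(e_1,e_2,e_3,e_4)$ and $\operatorname{span}(e_3,\dots,e_{2m-2})$ onto the $SO(n)$-block $V_m:=\operatorname{span}(e_5,\dots,e_{2m})$, and that its first two columns are $\tfrac1{\sqrt2}(e_1+e_2)$ and $\tfrac1{\sqrt2}(-ie_3+e_4)$. Since $\eta_{-1}=\left(\begin{smallmatrix}0&\hat B_1\\ -\hat B_1^tI_{1,3}&0\end{smallmatrix}\right)$ interchanges $V_c$ and $V_m$, it follows at once that $X$ interchanges $\operatorname{span}(e_1,e_2,e_{2m-1},e_{2m})$ and $\operatorname{span}(e_3,\dots,e_{2m-2})$; writing $X$ in the $2+(2m-4)+2$ block pattern of index groups $\{1,2\}\mid\{3,\dots,2m-2\}\mid\{2m-1,2m\}$, this means $X=\left(\begin{smallmatrix}0&\tilde f&0\\ \mathcal B&0&\mathcal C\\ 0&\mathcal E&0\end{smallmatrix}\right)$ with $\tilde f\in Mat(2\times(2m-4),\mathbb C)$.

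The crucial step, and the only place where the hypothesis on $\eta$ is genuinely used rather than just the ambient group structure, is to show $\mathcal E=0$. For any column index $j\in\{3,\dots,2m-2\}$ the vector $\tilde Pe_j$ lies in $V_m$, so $\eta_{-1}\tilde Pe_j$ is a linear combination of the columns of $\hat B_1$; each such column has the form $(a,a,b,ib,0,\dots,0)^t$ with $a,b$ meromorphic, which is exactly the span of the first two columns of $\tilde P$. Hence $Xe_j=\tilde P^{-1}\eta_{-1}\tilde Pe_j\in\operatorname{span}(e_1,e_2)$, so the entries of $X$ in rows $2m-1,2m$ and column $j$ vanish; that is, $\mathcal E=0$, and the entries of $\tilde f$ are meromorphic (linear combinations of the $f_{ij}$).

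Finally I would recover $\mathcal B$ and $\mathcal C$ from $X^tJ_{2m}+J_{2m}X=0$ with $J_{2m}=\left(\begin{smallmatrix}0&0&J_2\\0&J_{2m-4}&0\\J_2&0&0\end{smallmatrix}\right)$: comparing the $\{1,2\}\times\{3,\dots,2m-2\}$ blocks gives $\mathcal B^tJ_{2m-4}+J_2\mathcal E=0$, so $\mathcal B=0$ once $\mathcal E=0$; comparing the $\{3,\dots,2m-2\}\times\{2m-1,2m\}$ blocks gives $\tilde f^tJ_2+J_{2m-4}\mathcal C=0$, so (using $J_{2m-4}^2=I$) $\mathcal C=-J_{2m-4}\tilde f^tJ_2=-\tilde f^{\sharp}$. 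This is precisely the asserted normal form. I expect the only delicate point to be the bookkeeping behind the structure of $\tilde P$ — verifying carefully that the $SO^+(1,3)$-coordinates land in the four ``corner'' positions and the $SO(n)$-coordinates in the ``middle'' ones; everything else is short and essentially forced, and as a fallback one can simply multiply the three matrices out directly, as is presumably done in Section 4.
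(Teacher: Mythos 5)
Your argument is correct: the paper itself offers no proof beyond the remark that the lemma ``can be verified by straightforward computations,'' i.e.\ by conjugating $\eta_{-1}$ with $\tilde P$, which is exactly what you do. Your organization of that computation --- reading off the block pattern of $X=\tilde P^{-1}\eta_{-1}\tilde P$ from the fact that the columns $(a,a,b,ib)^t$ of $\hat B_1$ span the image of $\operatorname{span}(e_1,e_2)$ under $\tilde P$, and then recovering $\mathcal B=0$ and $\mathcal C=-\tilde f^{\sharp}$ from $X^tJ_{2m}+J_{2m}X=0$ --- is a clean and complete way to carry out that same computation.
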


 \begin{lemma}\label{lemma-mini2}Let $\eta$ be as in Lemma \ref{lemma-mini1}. Then $H=I_{2m}+\lambda^{-1}H_1+\lambda^{-2}H_2$ is the solution to \begin{equation}\label{eq-min-ini}H^{-1}dH=\mathcal{P}(\eta),\ ~ H|_{z=0}=I_{2m}
 \end{equation} with
\begin{equation}H_1=\left(
        \begin{array}{ccc}
          0 & f & 0 \\
          0 & 0 & -f^{\sharp} \\
          0 & 0 & 0 \\
        \end{array}
      \right),\ H_2=\left(
        \begin{array}{ccc}
          0 & 0 & g \\
          0 & 0 & 0 \\
          0 & 0 & 0 \\
        \end{array}
      \right),\end{equation}
\begin{equation} f= \int_0^z\tilde{f}dz,\ g=-\int_0^z(f\tilde{f}^{\sharp})dz.
\end{equation}
\end{lemma}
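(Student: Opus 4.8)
The plan is to verify directly that the claimed $H = I_{2m} + \lambda^{-1}H_1 + \lambda^{-2}H_2$ solves the ODE $H^{-1}dH = \mathcal{P}(\eta)$ with the stated initial condition, exploiting the nilpotent block structure provided by Lemma 2.5. First I would record the shorthand $\Xi := \begin{pmatrix} 0 & \tilde f & 0 \\ 0 & 0 & -\tilde f^\sharp \\ 0 & 0 & 0 \end{pmatrix}$, so that $\mathcal{P}(\eta) = \lambda^{-1}\Xi\, dz$, and observe that $\Xi$ is strictly block-upper-triangular with respect to the $2 + (2m-4) + 2$ block decomposition; hence $\Xi^2 = \begin{pmatrix} 0 & 0 & -\tilde f\tilde f^\sharp \\ 0 & 0 & 0 \\ 0 & 0 & 0 \end{pmatrix}$ and $\Xi^3 = 0$. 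Since $f = \int_0^z \tilde f\,dz$ and $g = -\int_0^z f\tilde f^\sharp\,dz$, we have $df = \tilde f\,dz$, $df^\sharp = \tilde f^\sharp\,dz$ (the operation $X \mapsto X^\sharp = J_{2m-4}X^t J_2$ is $\C$-linear, so it commutes with $d$ and with integration), and $dg = -f\tilde f^\sharp\,dz$. Therefore
\[
dH = \lambda^{-1}\begin{pmatrix} 0 & \tilde f & 0 \\ 0 & 0 & -\tilde f^\sharp \\ 0 & 0 & 0 \end{pmatrix}dz + \lambda^{-2}\begin{pmatrix} 0 & 0 & -f\tilde f^\sharp \\ 0 & 0 & 0 \\ 0 & 0 & 0 \end{pmatrix}dz = \lambda^{-1}\Xi\,dz + \lambda^{-2}\begin{pmatrix} 0 & 0 & -f\tilde f^\sharp \\ 0 & 0 & 0 \\ 0 & 0 & 0 \end{pmatrix}dz.
\]

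Next I would compute $H\cdot\mathcal{P}(\eta) = (I_{2m} + \lambda^{-1}H_1 + \lambda^{-2}H_2)\lambda^{-1}\Xi\,dz$. Here the crucial point is that $H_1 = \begin{pmatrix} 0 & f & 0 \\ 0 & 0 & -f^\sharp \\ 0 & 0 & 0\end{pmatrix}$ has exactly the same block pattern as $\Xi$, so that $H_1\Xi = \begin{pmatrix} 0 & 0 & -f\tilde f^\sharp \\ 0 & 0 & 0 \\ 0 & 0 & 0 \end{pmatrix}$ (the product of the middle-column block of $H_1$ with the right-column block of $\Xi$), while $H_2\Xi = 0$ because $H_2$ is supported only in the $(1,3)$ block and $\Xi$ has zero first block-row. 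Hence
\[
H\cdot\mathcal{P}(\eta) = \lambda^{-1}\Xi\,dz + \lambda^{-2}H_1\Xi\,dz = \lambda^{-1}\Xi\,dz + \lambda^{-2}\begin{pmatrix} 0 & 0 & -f\tilde f^\sharp \\ 0 & 0 & 0 \\ 0 & 0 & 0 \end{pmatrix}dz,
\]
which agrees term-by-term with $dH$ computed above. Thus $dH = H\cdot\mathcal{P}(\eta)$, i.e. $H^{-1}dH = \mathcal{P}(\eta)$, and since $f(0) = g(0) = 0$ by the choice of integration from the base point $z = 0$, we get $H|_{z=0} = I_{2m}$. By uniqueness of solutions to this linear ODE with given initial value, $H$ is the desired solution.

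I do not anticipate a serious obstacle here; the lemma is essentially a bookkeeping computation once Lemma 2.5 has placed $\eta$ in the strictly block-upper-triangular normal form. The only point requiring a little care is the interaction of the $\sharp$ operation with differentiation and integration — one must check that $\left(\int_0^z \tilde f\,dz\right)^\sharp = \int_0^z \tilde f^\sharp\,dz$, which holds because $\sharp$ is a fixed $\C$-linear map on matrix entries and therefore passes through the integral — and the verification that $H$ so defined actually takes values in $\mathcal{P}(\Lambda SO^+(1,2m-1))$, which follows from the general DPW theory since $\mathcal{P}(\eta)$ lies in the corresponding loop algebra, so no extra argument is needed. Everything else is the truncation of the exponential series $H = I + \lambda^{-1}H_1 + \lambda^{-2}H_2$ forced by $\Xi^3 = 0$.
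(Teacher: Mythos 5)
Your proposal is correct and is exactly the ``straightforward computation'' that the paper itself omits (it merely states that Lemmas \ref{lemma-mini1} and \ref{lemma-mini2} can be verified by straightforward computations): you check directly that $dH = H\cdot\mathcal{P}(\eta)$ using the nilpotent block structure, the identity $H_2\Xi=0$, and the fact that $\sharp$ commutes with integration. No gaps.
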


Note that if $\eta$ is derived from some strong Willmore map, then $\eta$ is meromorphic and also $H$, the integration of $\mathcal{P}(\eta)$, is meromorphic. If in Lemma \ref{lemma-mini1} we start from some normalized potential and want to construct a  strong Willmore map defined on $\mathbb{D}$, then we need to assume that $\eta$ is meromorphic and also that $H$ is meromorphic.

 \begin{lemma}\label{lemma-mini3} Retaining the assumptions and the notation of the previous lemmas, assume that $\mathcal{P}(\eta)$ is the normalized potential of some harmonic map, we obtain:

 The Iwasawa decomposition of $H$ is
 \[H=\tilde{F}\tilde{F}_+, \hbox{ with } \tilde{F}\in \mathcal{P}(\Lambda SO^+(1,2m-1)_{\sigma})\subset \Lambda G(2m,\C)_{\sigma},\ \tilde{F}_+\in\Lambda^+ G(2m,\C)_{\sigma}.\]
And $\tilde{F}$ is given by (see also (3.7) of \cite{Wang-1})
\begin{equation}\label{eq-m-c-min}\tilde{F}=H\hat{\tau}(W)L_0^{-1}.
\end{equation}
Here
$W$, $W_0$ and $L_0$ are the solutions to the matrix equations
$$ \hat{\tau}(H)^{-1}H= WW_0\hat{\tau}(W)^{-1},\  ~~ W_0=\hat{\tau}(L_0)^{-1}L_0$$
with $$W=I_{2m}+\lambda^{-1}W_1+\lambda^{-2}W_2,\ ~ W_1=\left(
        \begin{array}{ccc}
          0 & u & 0 \\
         0 & 0 & -u^{\sharp} \\
          0 &  0 & 0 \\
        \end{array}
      \right),\ ~ W_2=\left(
        \begin{array}{ccc}
         0 & 0 & \hat{g} \\
          0 & 0 & 0 \\
          0 & 0 & 0 \\
        \end{array}
      \right),$$
and $$W_0=\left(
        \begin{array}{ccc}
          a & 0 & b\\
          0 & q & 0 \\
          0 & 0 & d \\
        \end{array}
      \right),
     \  ~~L_0=\left(
                                \begin{array}{ccc}
                                  l_1 & 0 & l_2 \\
                                  0 & l_0 & 0\\
                                  0 & 0 & l_4 \\
                                \end{array}
                              \right),
      ~\hbox{ with } l_1,\ l_4 \hbox{ upper triangular} .$$
Moreover, we have
\begin{subequations}  \label{eq-iwa:1}
\begin{align}
 &d=I_2+E_4\bar{f}^{t\sharp} {f}^{ \sharp}+ E_4\bar{g}^{t}E_1g,          \label{eq-iwa:1A} \\
  & u^{\sharp}d =f^{\sharp}-\bar{f}^tE_1g,       \label{eq-iwa:1B} \\
  & q + u^{\sharp}dE_4\bar{u}^{\sharp t} =I_{2m-4}+\bar{f}^tE_1f,   \label{eq-iwa:1C}\\
& a+ uq\bar{u}^tE_1+ gE_4\bar{\hat{d}}^t\bar{g}^tE_1=I_2, \label{eq-iwa:1D}\\
& b+uq\bar{u}^{t}E_2+gE_4\bar{\hat{d}}^t\bar{g}^tE_2 =E_2\bar{f}^{t \sharp}{f}^{ \sharp} +E_2\bar{g}^{t}E_1g, \label{eq-iwa:1E}\\
 & uq-gE_4\bar{u}^{\sharp t}= f.  \label{eq-iwa:1F}
\end{align}
\end{subequations}
Here $E_1,$ $E_2$, $E_3$ and $E_4$ are defined as
\begin{equation}\label{eq-eij}E_1=\left(
               \begin{array}{cc}
                 0 & 0 \\
                 0 & 1 \\
               \end{array}
             \right),\ E_2=E_3^t=\left(
               \begin{array}{cc}
                 0 & 1 \\
                 0 & 0 \\
               \end{array}
             \right),\ E_4=\left(
               \begin{array}{cc}
                 1 & 0 \\
                 0 & 0 \\
               \end{array}
             \right).
\end{equation}
 \end{lemma}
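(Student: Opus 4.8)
The plan is to follow the standard loop-group Iwasawa bookkeeping used in Section 3 of \cite{Wang-1}, specializing it to the very simple structure dictated by Lemmas \ref{lemma-mini1} and \ref{lemma-mini2}. First I would record that, by Lemma \ref{lemma-mini2}, $H = I_{2m} + \lambda^{-1}H_1 + \lambda^{-2}H_2$ is a degree-$2$ Laurent polynomial in $\lambda^{-1}$ with $H_1,H_2$ strictly ``upper-block-triangular'' in the $2 + (2m-4) + 2$ block decomposition, and that $\hat\tau$ acts by $F \mapsto S_{2m}^{-1}\bar F S_{2m}$, sending $\lambda \mapsto 1/\bar\lambda$ under the reality condition, hence $\hat\tau(H) = I_{2m} + \lambda\,\bar H_1' + \lambda^2 \bar H_2'$ for suitable conjugates. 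The product $\hat\tau(H)^{-1}H$ is then a Laurent polynomial in $\lambda$ of the finite width $-2 \le \deg \le 2$, so the Birkhoff-type factorization $\hat\tau(H)^{-1}H = W W_0 \hat\tau(W)^{-1}$ can be solved with $W = I_{2m} + \lambda^{-1}W_1 + \lambda^{-2}W_2$ of the same triangular shape as $H$ (this shape is forced because the nilpotent subalgebra in which $\mathcal P(\eta)$ lives is preserved under the relevant operations, cf.\ \cite{BuGu}, \cite{Do-Es}). Finiteness of the uniton number here is exactly what makes $W$ a polynomial rather than a general loop.

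Next I would write out $\hat\tau(H)^{-1}H = W W_0 \hat\tau(W)^{-1}$ and compare coefficients of each power $\lambda^k$, $k = 2,1,0,-1,-2$, using the explicit block forms of $H_1,H_2$ and of the ansatz matrices $W_1,W_2,W_0,L_0$. The $\lambda^2$ and $\lambda^{-2}$ equations involve only $H_2$ and $W_2$ and the leading block entries; the $\lambda^{\pm1}$ equations bring in $H_1,W_1$; and the $\lambda^0$ equation mixes everything and produces the block matrix $W_0$ with its $3\times3$ block-diagonal-plus-corner shape $\mathrm{diag}$-like pattern $\begin{pmatrix} a & 0 & b \\ 0 & q & 0 \\ 0 & 0 & d\end{pmatrix}$. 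Substituting $f = \int_0^z \tilde f\,dz$ and $g = -\int_0^z f\tilde f^\sharp\,dz$ from Lemma \ref{lemma-mini2}, and relabelling the $\hat\tau$-conjugates of $f,g$ via the matrices $E_1,\dots,E_4$ of \eqref{eq-eij} (which encode the action of $S_2$ and $J_{2m-4}$ on the relevant blocks), one reads off precisely the six relations \eqref{eq-iwa:1A}--\eqref{eq-iwa:1F}. The decomposition $W_0 = \hat\tau(L_0)^{-1}L_0$ with $L_0$ block-diagonal and $l_1,l_4$ upper triangular is then the (unique) Cholesky-type factorization of the positive block $W_0$, and $\tilde F = H\hat\tau(W)L_0^{-1}$ is the resulting frame, lying in $\mathcal P(\Lambda SO^+(1,2m-1)_\sigma)$ because it is $\hat\tau$-fixed by construction; that it is the honest Iwasawa factor (i.e.\ $\tilde F_+ := \tilde F^{-1}H \in \Lambda^+$) follows from degree-counting exactly as in (3.7) of \cite{Wang-1}.

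I expect the main obstacle to be purely organizational rather than conceptual: keeping consistent track, through the $\hat\tau$-conjugation, of which blocks of $\bar f$, $\bar g$ get hit by $S_2$, $S_{2m-4}$, $J_2$, $J_{2m-4}$ and hence by which of the $E_i$, and making sure the ``$\sharp$'' operation $\tilde f^\sharp = J_{2m-4}\tilde f^t J_2$ is propagated correctly into $u^\sharp$ and into the right-hand sides of \eqref{eq-iwa:1B}, \eqref{eq-iwa:1E}, \eqref{eq-iwa:1F}. A secondary point requiring a small argument is the claim that $W$ can be taken of the stated truncated form: one must check that no higher powers $\lambda^{-3},\lambda^{-4},\dots$ are needed, which is where ``finite uniton type with uniton number $\le 2$'' is used, together with the nilpotency of the coefficient algebra, so that all triple products of $H_1,H_2,W_1,W_2$ vanish and the comparison of coefficients closes at degree $2$. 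Once these reality/$\sharp$ bookkeeping issues are handled, the verification of \eqref{eq-iwa:1A}--\eqref{eq-iwa:1F} is a direct, if lengthy, block computation, which is why it is deferred to Section 4.
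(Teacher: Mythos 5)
Your overall strategy coincides with the paper's: pass to the factorization $\hat{\tau}(H)^{-1}H=WW_0\hat{\tau}(W)^{-1}$, compare coefficients of the powers of $\lambda$ from $-2$ to $2$, read off the six relations \eqref{eq-iwa:1}, and then split $W_0=\hat{\tau}(L_0)^{-1}L_0$. However, there is one step whose stated justification would not survive scrutiny: you impose from the outset that $W_1$ has the same strictly upper block-triangular shape as $H_1$ (and, implicitly, that $W_0$ has no lower-left corner block), on the grounds that ``the nilpotent subalgebra in which $\mathcal{P}(\eta)$ lives is preserved under the relevant operations.'' That reasoning is not valid: $\hat{\tau}$ transposes the block triangularity, so $\hat{\tau}(H)^{-1}H$ does not lie in the nilpotent subalgebra, and nothing about the factorization preserves that shape a priori. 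What the twisting and the $J_{2m}$-orthogonality actually force is only the more general ansatz
\[
W_1=\left(\begin{array}{ccc} 0 & u & 0 \\ -v^{\sharp} & 0 & -u^{\sharp} \\ 0 & v & 0 \end{array}\right),\qquad
W_0=\left(\begin{array}{ccc} a & 0 & b \\ 0 & q & 0 \\ c & 0 & d \end{array}\right),
\]
and the paper then extracts $vq=0$ from the $\lambda^{-1}$ coefficient (whence $v=0$, since $q$ is invertible) and $c=0$ from the $\lambda^{0}$ coefficient. Without this step your six equations are derived from an ansatz you have not shown to be attainable. The gap is repairable along your lines, but only if you add the missing argument: solve the restricted system (which, as the remark following the lemma shows, is possible exactly when $d$ is invertible) and then invoke uniqueness of the factorization $\hat{\tau}(H)^{-1}H=\hat{\tau}(P)^{-1}P$ up to a constant real factor in $K$, so that the $P=L_0\hat{\tau}(W)^{-1}$ you construct is the genuine Iwasawa factor up to an allowed gauge.

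A secondary point: describing $W_0=\hat{\tau}(L_0)^{-1}L_0$ as ``the unique Cholesky-type factorization of the positive block $W_0$'' glosses over the non-compactness of the real form. The paper has to verify by hand, using $W_0=\hat{\tau}(W_0)^{-1}$ together with $W_0^tJ_{2m}W_0=J_{2m}$, that the $4\times 4$ part of $W_0$ is upper triangular with specific entry relations before $l_1,l_4$ upper triangular can be produced, and the splitting $q=\bar{l}_0l_0$ is a genuine restriction guaranteed only near the identity (there are two open Iwasawa cells here). Your write-up should make explicit where this local hypothesis enters; otherwise the block computation you outline is the paper's.
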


 \begin{remark} \

  1. Since in Lemma \ref{lemma-mini3} the matrices $f$ and $g$, whence also $f^{\sharp}$, are given, equation \eqref{eq-iwa:1A} determines $d$, where $d$ is invertible (certainly  true for small $z$ close to $z=0$). Then equation \eqref{eq-iwa:1B} determines $u^{\sharp}$, hence $u$. Inserting this into \eqref{eq-iwa:1C} results in determining $q$. Inserting what we have so far into \eqref{eq-iwa:1D} determines $a$ and similarly from \eqref{eq-iwa:1E} we obtain $b$. The last equation, \eqref{eq-iwa:1F}, is a consequence of the previous equations. Therefore, the only condition for the solvability of the system of equations is the invertibility of $d$.

 2. If $f$ and $g$ are rational functions of $z$, the invertibility of $d$ is satisfied locally, whence on an open dense subset due to the rational expression in $z,\bar{z}$.
 \end{remark}

\begin{lemma}\label{lemma-mini4}Retaining the assumptions and the notation of the previous lemmas, the Maurer-Cartan form  of  $\tilde{F}$ in \eqref{eq-m-c-min} is of the form
\begin{equation}\label{eq-m-c-min2}\tilde\alpha_{\mathfrak{p}}'=\lambda^{-1}\left(
                                \begin{array}{ccc}
                                  0 & \tilde{b} & 0 \\
                                  0 & 0 & -\tilde{b}^{\sharp}\\
                                  0 & 0 & 0 \\
                                \end{array}
                              \right)dz,\ ~~\tilde\alpha_\mathfrak{k}'=\left(
                                \begin{array}{ccc}
                                  a_1 & 0 & a_2 \\
                                  0 & a_0 & 0\\
                                  0 & 0 & a_4 \\
                                \end{array}
                              \right)dz.
\end{equation}
 \end{lemma}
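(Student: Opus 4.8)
\textbf{Plan of proof for Lemma \ref{lemma-mini4}.}
The plan is to compute $\tilde\alpha = \tilde F^{-1} d\tilde F$ directly from the explicit formula $\tilde F = H\hat\tau(W)L_0^{-1}$ established in Lemma \ref{lemma-mini3}, and then to split $\tilde\alpha$ into its $\mathfrak{k}$-part and $\mathfrak{p}$-part and track which Fourier powers of $\lambda$ can occur. First I would write
\[
\tilde\alpha = L_0\,\big(\hat\tau(W)^{-1}H^{-1}dH\,\hat\tau(W)\big)L_0^{-1}
  + L_0\,\big(\hat\tau(W)^{-1}d\hat\tau(W)\big)L_0^{-1} - dL_0\,L_0^{-1}.
\]
Since $H = I_{2m} + \lambda^{-1}H_1 + \lambda^{-2}H_2$ and $W = I_{2m} + \lambda^{-1}W_1 + \lambda^{-2}W_2$ are Laurent polynomials in $\lambda^{-1}$ of degree $2$ (Lemmas \ref{lemma-mini2} and \ref{lemma-mini3}), the conjugated combination $\hat\tau(W)^{-1}H^{-1}dH\,\hat\tau(W)$ is a priori a Laurent polynomial whose powers of $\lambda$ range over a small window; applying $\hat\tau$ (which sends $\lambda\mapsto\bar\lambda=1/\lambda$ on $S^1$) and using $\mathcal P(\eta) = \lambda^{-1}(\cdots)$ from Lemma \ref{lemma-mini1} keeps the $\hat\tau(W)^{-1}d\hat\tau(W)$ term in nonnegative powers of $\lambda$, while $H^{-1}dH = \mathcal P(\eta)$ contributes exactly the single power $\lambda^{-1}$. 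The point is that $\tilde F$ lies in $\mathcal P(\Lambda SO^+(1,2m-1)_\sigma)$, so $\tilde\alpha$ is $\sigma$-twisted: its $\mathfrak p$-part carries only odd powers of $\lambda$ and its $\mathfrak k$-part only even powers. Combined with the degree bound, this forces $\tilde\alpha_\mathfrak{p} = \lambda^{-1}(\text{something})dz$ and $\tilde\alpha_\mathfrak{k} = \lambda^0(\text{something})dz$, with no $d\bar z$ component because $H$ (hence, by the construction, the relevant pieces) is holomorphic in $z$ — here one uses that the normalized potential has only a $dz$-part and that the $\bar z$-dependence enters solely through $W_0$, $L_0$, which are absorbed.

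Next I would pin down the block structure. The matrices $H_1$, $W_1$ have the strictly-block-upper-triangular shape $\mathrm{diag}$-block pattern with the $(1,2)$, $(2,3)$ entries of the form $f$, $-f^\sharp$ (resp. $u$, $-u^\sharp$), and $H_2$, $W_2$ have only the $(1,3)$ corner entry. The group element $L_0$ is block-diagonal with blocks $l_1, l_0, l_4$ and $l_1, l_4$ upper triangular, so conjugation by $L_0$ and subtraction of $dL_0 L_0^{-1}$ preserve the block-diagonal shape of $\mathfrak k$-valued terms and the off-diagonal $(1,2),(2,3),(1,3)$ shape of $\mathfrak p$-valued terms. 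The $\lambda^{-1}$ term of $\tilde\alpha$ comes entirely from $L_0(H^{-1}dH)L_0^{-1}$ at lowest order, i.e. from $L_0 \mathcal P(\eta) L_0^{-1}$ which, by Lemma \ref{lemma-mini1} and the block-diagonal form of $L_0$, is again of the shape $\lambda^{-1}\big(\begin{smallmatrix}0&\tilde b&0\\0&0&-\tilde b^\sharp\\0&0&0\end{smallmatrix}\big)dz$ with $\tilde b = l_1 \tilde f l_0^{-1}$; one checks the $\sharp$-compatibility $(\,l_1\tilde f l_0^{-1}\,)^\sharp = l_0\,\tilde f^\sharp\, l_4^{-1}$ is the correct $(2,3)$-block, using $J$-orthogonality of $L_0$ (i.e. $L_0 \in G(2m,\mathbb C)$, so $l_4 = (l_1^t)^{-1}$ up to the $J$'s and $l_0^\sharp = l_0^{-1}$). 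The $\lambda^0$ term assembles $a_1, a_0, a_4$ from the remaining contributions; I would not compute these explicitly, only confirm they sit in the claimed block-diagonal positions, which follows because the $\mathfrak k$-component of any $\sigma$-twisted loop algebra element is block-diagonal of type $\mathfrak{so}(1,3)\oplus\mathfrak{so}(2m-4)$, which under $\mathcal P$ becomes precisely the $\{F = D_0^{-1}FD_0\}$ block pattern $\mathrm{diag}$ with the $2+(2m-4)+2$ splitting, i.e. the three-block form displayed.

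The main obstacle I anticipate is bookkeeping the cancellation of all $\lambda^{\le -2}$ and $\lambda^{\ge 1}$ powers in $\tilde\alpha_\mathfrak{p}$, and of all $\lambda^{\ne 0}$ powers in $\tilde\alpha_\mathfrak{k}$: a naive expansion of $\hat\tau(W)^{-1}H^{-1}dH\,\hat\tau(W)$ produces terms as low as $\lambda^{-3}$ and as high as $\lambda^{+2}$, and one must verify these are killed either by the twisting constraint or by the defining relations $\hat\tau(H)^{-1}H = WW_0\hat\tau(W)^{-1}$, $W_0 = \hat\tau(L_0)^{-1}L_0$ of Lemma \ref{lemma-mini3} — equivalently, by the very fact that $\tilde F$ is the Iwasawa factor, so $\tilde\alpha$ automatically lies in $\Lambda\mathfrak g_\sigma$ with the correct reality and the "big cell" degree bound inherited from $\mathcal P(\eta)$ having a single negative power. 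The cleanest route is therefore not brute-force expansion but to argue abstractly: $\tilde\alpha_\mathfrak{p}'$ (the $\lambda^{-1}$ coefficient plus possibly lower) equals $\tilde\alpha$ minus $\tilde\alpha_\mathfrak{k}$; since $\tilde F$ is obtained from a degree-$(-1)$ potential by dressing with a $\lambda$-independent-at-$z=0$ correction of nonnegative type (the $W$, $L_0$ factors built from $\hat\tau$), the Maurer--Cartan form has no $\lambda^{\le -2}$ part (standard in the Burstall--Guest / DPW finite-uniton setting, cf. \cite{BuGu}, \cite{DoWa1}), and the reality condition $\hat\tau(\tilde F) = \tilde F$ then forces the $\mathfrak p$-part into odd powers and the $\mathfrak k$-part into even powers, leaving exactly $\lambda^{-1}$ and $\lambda^0$ respectively. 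Finally I would record $\tilde b$ in terms of $l_1, \tilde f, l_0$ for use in the subsequent lemmas.
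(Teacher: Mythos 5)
Your overall strategy coincides with the paper's: expand $\tilde F^{-1}d\tilde F$ from $\tilde F = H\hat\tau(W)L_0^{-1}$, use the reality and twisting conditions to cut the a priori Laurent range down to $\lambda^{-1}\tilde\alpha_{-1}+\tilde\alpha_0+\lambda\hat\tau(\tilde\alpha_{-1})$, and identify the $\lambda^{-1}$-coefficient as $L_0\eta_{-1}L_0^{-1}\,dz$; your verification of its shape and of the $\sharp$-compatibility $\tilde b=l_1\tilde f l_0^{-1}$, $\tilde b^{\sharp}=l_0\tilde f^{\sharp}l_4^{-1}$ via $L_0^tJ_{2m}L_0=J_{2m}$ is correct and matches the paper.

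However, there is a genuine gap in your treatment of the $\lambda^0$-coefficient. You claim its block pattern "follows because the $\mathfrak k$-component of any $\sigma$-twisted loop algebra element" has the displayed three-block form. This is not true: the fixed-point set of $\mathrm{Ad}(D_0)$ consists of matrices with possibly nonzero blocks in all of the positions $(1,1),(2,2),(3,3),(1,3)$ \emph{and} $(3,1)$, whereas \eqref{eq-m-c-min2} asserts that the $(3,1)$-corner vanishes. That vanishing is precisely the nontrivial content of the lemma: it is what makes $\tilde\alpha'$ block \emph{upper}-triangular, and it is exactly what Lemma \ref{lemma-mini5} uses to obtain $(\phi_1+\phi_2)_z=a_{12}(\phi_1+\phi_2)$ and hence the constant lightlike vector; general membership in $\mathfrak k^{\mathbb C}$ gives nothing of the sort. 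To close the gap you must actually compute the $\lambda^0$-term, namely
\begin{equation*}
\tilde\alpha_0'=\Bigl(L_0\,[\eta_{-1},\hat\tau(W_1)]\,L_0^{-1}+L_0(L_0^{-1})_z\Bigr)dz,
\end{equation*}
and observe that, because $\eta_{-1}$ has nonzero blocks only in positions $(1,2)$ and $(2,3)$ (strict block-upper-triangularity of the potential), the commutator with $\hat\tau(W_1)$ can only populate the positions $(1,1),(2,2),(3,3),(1,3)$ and never $(3,1)$; block-upper-triangularity is then preserved under conjugation by $L_0$ and under adding $L_0(L_0^{-1})_z$. Related to this, note that $L_0$ is not block-diagonal as you state --- it carries the corner block $l_2$ --- so conjugation by $L_0$ does not preserve block-diagonality, only block-upper-triangularity, which is fortunately all that the argument requires.
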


\begin{lemma}\label{lemma-mini5} Let $\mathcal{F}:M\rightarrow SO^+(1,2m-1)/SO^+(1,3)\times SO(2m-4)$ be a strongly conformally harmonic map with an extended frame $F$. If the  Maurer-Cartan form of $\tilde{F}=\mathcal{P}(F)$ is of the form
\eqref{eq-m-c-min2} in $\mathfrak{g}(2m,\C)$, then $\mathcal{F}$ contains a constant light-like vector. Therefore, if $\mathcal{F}$ is the conformal Gauss map of some Willmore map $y$, $y$ is M\"{o}bius equivalent to a minimal surface in $\R^{2m}$.\end{lemma}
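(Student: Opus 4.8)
\textbf{Proof proposal for Lemma \ref{lemma-mini5}.}

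The plan is to reverse-engineer the structure of the extended frame $F$ from the normalized shape \eqref{eq-m-c-min2} of the Maurer--Cartan form of $\tilde F=\mathcal P(F)$, and then transport the resulting ``constant vector'' back through the isometry $\mathcal P$ to $SO^+(1,2m-1)$. First I would observe that $\tilde\alpha=\tilde F^{-1}d\tilde F$ is a $\lambda$-dependent $1$-form whose $\mathfrak p$-part lives only in the $\lambda^{-1}$-slot and is supported in the block positions $(1,2)$ and $(2,3)$ (in the $2+(2m-4)+2$ block decomposition), while the $\mathfrak k$-part is block-diagonal of type $\mathrm{diag}(a_1,a_0,a_4)$. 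The essential point is that the first block row of $\tilde\alpha$ has all entries equal to zero except the $(1,1)$-block entry $a_1$ and the $(1,2)$-block entry $\lambda^{-1}\tilde b\,dz$, and crucially \emph{no} $(1,3)$-entry and nothing in the $\bar z\,d\bar z$ part of the first row apart from the reality companion of $a_1$. I would then use the involution $\hat\tau(F)=S_{2m}^{-1}\bar F S_{2m}$ together with the $\lambda\mapsto$ twisting (i.e. the constraints defining $\Lambda SO^+(1,2m-1)_\sigma$ inside $\Lambda G(2m,\mathbb C)_\sigma$) to pin down exactly which rows/columns of $\tilde\alpha$ vanish identically; in particular the reality constraint forces the last block row to mirror the first, so that after conjugation the relevant lightlike direction is a fixed combination of the first and last basis vectors.

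The key computational step is the following: let $e$ be the first standard basis (column) vector of $\mathbb C^{2m}$, and consider the vector-valued function $v(z,\bar z,\lambda)=\tilde F\, e$. Differentiating, $dv = \tilde F\,\tilde\alpha\, e$, and by the block form \eqref{eq-m-c-min2} the vector $\tilde\alpha\, e$ has nonzero entries only in the first block (from $a_1$) — because $e$ is annihilated by the strictly-upper part coming from $\tilde b$ and $\tilde b^\sharp$ acting the ``wrong way''. Hence $v$ spans, at every point, a line that is preserved (up to scale) by the frame, i.e. $\tilde F$ maps the fixed line $\mathbb C e$ to a \emph{moving} line in general — so this naive attempt shows that $e$ is not yet constant. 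The correct move, which I expect to be the heart of the argument, is to look instead at a specific \emph{row} of $\tilde F^{-1}$, equivalently a column of $\tilde F^{t}$ or, using the symmetric form $J_{2m}$ defining $G(2m,\mathbb C)$, to look at $J_{2m}\tilde F J_{2m}$ acting on the \emph{last} basis vector. Because the bottom block \emph{column} of $\tilde\alpha$ is zero (only the $(3,3)$-block $a_4$ and $(2,3)$-block survive, and $a_4$ acts trivially on the relevant combination), one gets that the appropriate vector — call it $Y_0\in\mathbb R^{1,2m-1}$ after pulling back through $\tilde P$ — satisfies $dY_0=0$, i.e. it is genuinely constant. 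Then $Y_0^tI_{1,2m-1}Y_0=0$ is checked directly from the defining equation of the original group $SO^+(1,2m-1)$ and the fact that $Y_0$ is, up to the transformation $\tilde P$, a null combination $\phi_1\pm\phi_2$ of frame columns — exactly as in the proof of Theorem \ref{th-potential-light}. Finally, constancy of a lightlike vector contained in $\mathcal F(p)$ for all $p$ is precisely the hypothesis of Lemma~1.2(1), so if $\mathcal F$ is the conformal Gauss map of a Willmore $y$, then $y$ is M\"obius equivalent to a minimal surface in $\mathbb R^{2m}$ (here $\mathbb R^{2m}$ means $\mathbb R^{n+2}$ with $n+4=2m$).

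I would organize the write-up as: (i) record the block decomposition $\mathbb C^{2m}=\mathbb C^2\oplus\mathbb C^{2m-4}\oplus\mathbb C^2$ and the explicit form of $D_0$, $S_{2m}$, and the $\hat\tau$-reality condition on $\tilde F$; (ii) from \eqref{eq-m-c-min2}, write out $\tilde\alpha$ and its complex conjugate companion in $d\bar z$, using that $\tilde F\in\mathcal P(\Lambda SO^+(1,2m-1)_\sigma)$ so the $d\bar z$ part is determined by $\hat\tau$ from the $dz$ part; (iii) identify the unique (up to scale) vector $w\in\mathbb C^{2m}$ that is annihilated by \emph{both} $\tilde\alpha$ and its conjugate at every point — this forces $w$ to lie in the kernel of the strictly-triangular parts and to be fixed by the block-diagonal part, which by the explicit shape of $D_0$ (eigenvalue $-1$ on the outer blocks) and the reality condition localizes $w$ to a $1$-dimensional real subspace; (iv) set $\tilde Y_0=\tilde F w$ and verify $d\tilde Y_0=0$; (v) transport back: $Y_0=\tilde P\,\tilde Y_0$ is a constant vector of $\mathbb R^{1,2m-1}$ lying in $\mathcal F(p)$ for all $p$, and $Y_0^tI_{1,2m-1}Y_0=0$ because $w^tJ_{2m}w=0$ and $\mathcal P$ is an isometry of the bilinear forms; (vi) invoke Lemma~1.2(1). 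The main obstacle is step (iii): one must be careful that the block-diagonal piece $\mathrm{diag}(a_1,a_0,a_4)\,dz$ together with its $d\bar z$-conjugate does not contribute a nonzero derivative in the chosen direction — this requires checking that the relevant entry of $a_1$ (or $a_4$), say the $(1,1)$-scalar entry, actually vanishes, which follows from the $\mathfrak{so}(1,3)$-constraint on the $(1,1)$-block (skew-symmetry with respect to $I_{1,3}$ forces the diagonal to be zero) combined with the triangular normalization built into Lemma \ref{lemma-mini3}. Once that entry is seen to vanish, the constancy of $\tilde Y_0$ is immediate and the rest is bookkeeping.
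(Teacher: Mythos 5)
Your overall strategy --- identify a distinguished direction forced by the block structure of \eqref{eq-m-c-min2}, transport it back through $\mathcal{P}$, check that it is lightlike, and invoke Lemma 1.2(1) --- is the same as the paper's. But step (iv) of your outline contains a genuine gap: the vector $\tilde Y_0=\tilde F w$ is \emph{not} constant in general, and the justification you offer for the vanishing of the relevant diagonal entry is incorrect. The constraint $A_1^tI_{1,3}+I_{1,3}A_1=0$ does force the diagonal of $A_1$ to vanish \emph{in the orthonormal basis}, but the basis adapted to $\tilde P$ is the null basis $\phi_1\pm\phi_2$ in the $SO^+(1,3)$-block, and there the boost coefficient $a_{12}=(A_1)_{12}=(A_1)_{21}$ becomes a \emph{diagonal} entry (acting with eigenvalues $\pm a_{12}$ on $\phi_1\pm\phi_2$). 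In the $G(2m,\C)$ picture the Lie-algebra condition is skew-symmetry with respect to the anti-diagonal form $J_{2m}$, which only forces $(a_1)_{11}+(a_4)_{22}=0$, not the vanishing of either entry separately. Since $a_{12}$ is generically nonzero, what one actually obtains is the scalar ODE $(\phi_1+\phi_2)_z=a_{12}(\phi_1+\phi_2)$ (together with its $\bar z$-companion), i.e.\ the real \emph{line} $\R(\phi_1+\phi_2)$ is parallel, not the vector itself.

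This gap is fixable, and the paper closes it precisely at this point: it pulls $\tilde\alpha'$ back through $\mathcal{P}^{-1}$, reads off that the first two rows of $B_1$ coincide and that $A_1$ has the stated form, deduces $(\phi_1+\phi_2)_z=a_{12}(\phi_1+\phi_2)$, and then \emph{normalizes}: since $\phi_1+\phi_2$ is a nowhere-vanishing real null vector field spanning a fixed line, dividing by its first coordinate $\phi_{01}$ yields a genuinely constant lightlike vector, which lies in $\mathcal{F}(p)=\mathrm{span}\{\phi_1,\ldots,\phi_4\}$ for every $p$; that is all that ``$\mathcal F$ contains a constant lightlike vector'' requires. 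Your steps (v)--(vi) (lightlikeness via $w^tJ_{2m}w=0$ and the fact that $\mathcal P$ preserves the bilinear forms, then Lemma 1.2(1)) go through once this normalization is inserted; without it, the argument as written does not establish constancy.
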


Lemma \ref{lemma-mini1} and Lemma \ref{lemma-mini2} can be verified by straightforward computations. And the other lemmas will be proven in the following section.\ \vspace{2mm}

\leftline{\em Proof of Theorem \ref{th-minimal}:}\vspace{1mm}
Combination of the above five lemmas provides the proof of  Theorem \ref{th-minimal}.

\rightline{$\Box$}

Note  that Corollary \ref{cor-min} already follows from  the above three lemmas. The fact that $\eta_{-1}$ takes values in a nilpotent Lie subalgebra of rank $2$ comes from Theorem 2.6 and Lemma 3.5 of \cite{Wang-1}.
 \begin{remark}\

  1. For a general procedure for the computation of Iwasawa decompositions for algebraic loops, or more generally for rational loops, see $\S I.2$ of \cite{CG}, where they provided a somewhat constructive method to do the decompositions for such loops. One may also compare our treatment here with the ones for CMC surfaces in \cite{Do-Ha}.

  2. For the  theoretical description of the loop group of a non-compact Lie group, we refer to \cite{Ba-Do} and \cite{Ke1}. Another concrete discussion of  $\Lambda SU(1,1)$ can be found in \cite{B-R-S}.

  3. Recently  there have been several publications concerning harmonic maps into compact Lie groups and compact symmetric spaces which have used methods which are different from ours, see
  \cite{FSW}, \cite{Co-Pa}, \cite{S-W} and reference therein. Most of their work basically follows the techniques developed by  Uhlenbeck \cite{Uh} and Segal \cite{Segal}. We also  note that in \cite{S-W}, a converse  procedure is used  for the computations for the Iwasawa decompositions of  loops in   $\lambda_{alg}U(n)^{\C}$, which provides another way to do the concrete Iwasawa decomposition  for algebraic loops.
\end{remark}

\section{Iwasawa decompositions }

In this section we first provide the proof of Lemma \ref{lemma-mini3}, which corresponds to the Iwasawa decompositions. Then, we derive the M-C forms by the information from the explicit Iwasawa decompositions, which yields the geometric descriptions of the corresponding harmonic maps.

\subsection{Iwasawa decompositions and Lemma \ref{lemma-mini3}}\ \\
{\em Proof of Lemma \ref{lemma-mini3}:}  \vspace{2mm}

The first question is the existence of the Iwasawa decomposition for $H$. This is guaranteed by the existence of an Iwasawa decomposition on an open subset containing the identity and $H|_{z=0}=I$ (see Theorem 4.1 of \cite{DoWa1}, Theorem 2.3 of \cite{DoWa2}, also \cite{Ke1}). So certainly near $z=0$ we have $H=\tilde{F}\tilde{F}_+$. Since $\hat\tau(\tilde{F})=\tilde{F}$, the maximal and minimal powers of $\lambda$ of $\tilde{F}$ are $\lambda^2$ and $\lambda^{-2}$ respectively. Hence in $\tilde{F}_+$ the maximal power of $\lambda$ is at most $2$. Moreover, from the definition of $G(n+4,\mathbb{C})$ we infer that also $(\tilde{F}_+)^{-1}$ only contains the powers of $\lambda$ from $-2$ to $2$.
Moreover,
\[I_{n+4}=\hat\tau(\tilde{F})^{-1}\tilde{F}=\left(\hat\tau(\tilde{F}_+^{-1})\right)^{-1}\hat\tau(H)^{-1}H\tilde{F_+}^{-1}\]
implies that
\[\hat\tau(H)^{-1}H=\hat\tau(\tilde{F}_+^{-1})\tilde{F_+}.\]
 Let's write \[\hat\tau(\tilde{F}_+^{-1})=W\hat\tau(L_0)^{-1}\] with
 \[W= I_{2m}+\lambda^{-1}W_1+\lambda^{-2}W_2,\]
 where
 \[ \ ~~  W_1=\left(
        \begin{array}{ccc}
          0 & u & 0 \\
         -v^{\sharp} & 0 & -u^{\sharp} \\
          0 &  v & 0 \\
        \end{array}
      \right).\]
 Set $W_0=\hat{\tau}(L_0)^{-1}L_0$ with
      \[ W_0=\left(
        \begin{array}{ccc}
          a & 0 & b \\
          0 & q & 0 \\
          c & 0 & d \\
        \end{array}
      \right), ~~ W_0^{-1}=\left(
        \begin{array}{ccc}
          \hat{a} & 0 & \hat{b} \\
          0 & q^{-1} & 0 \\
          \hat{c} & 0 & \hat{d} \\
        \end{array}
      \right), \hbox{ and }~~~  L_0=\left(
                                \begin{array}{ccc}
                                  l_1 & 0 & l_2 \\
                                  0 & l_0 & 0\\
                                  0 & 0 & l_4 \\
                                \end{array}
                              \right).\]
With these notations,    we obtain
$$ \hat{\tau}(H)^{-1}H=WW_0\hat\tau(W)^{-1}.$$
For explicit computations we recall
from \eqref{eq-def-tau} (See also (3.7) of \cite{Wang-1}) that for any $F\in G(2m,\mathbb{C})$ we have
\begin{equation}\label{eq-def-tau-inv}\hat\tau(F)^{-1}=\hat{J}_{2m}\bar{F}^t\hat{J}_{2m}, \end{equation}
where
\begin{equation}\hat{J}_{2m}=S_{2m}J_{2m}=\left(
  \begin{array}{ccc}
    E_1 & 0 & E_2 \\
    0 & I_{2m-4} & 0 \\
    E_3 &0 & E_4 \\
  \end{array}
\right).\end{equation}
Recall that \eqref{eq-eij}
\begin{equation*}E_1=\left(
               \begin{array}{cc}
                 0 & 0 \\
                 0 & 1 \\
               \end{array}
             \right),\ E_2=E_3^t=\left(
               \begin{array}{cc}
                 0 & 1 \\
                 0 & 0 \\
               \end{array}
             \right),\ E_4=\left(
               \begin{array}{cc}
                 1 & 0 \\
                 0 & 0 \\
               \end{array}
             \right).
\end{equation*}
As a consequence, we derive
\begin{equation}\label{eq-wh}\left\{
\begin{split}
W_2W_0 &=H_2,\\
W_1W_0+W_2W_0 (\hat{J}_{2m}\bar{W}_1^t\hat{J}_{2m})&=H_1+ (\hat{J}_{2m}\bar{H}_1^t\hat{J}_{2m})H_2,\\
\widehat{W} &=I_{2m}+ (\hat{J}_{2m}\bar{H}_1^t\hat{J}_{2m})H_1+(\hat{J}_{2m}\bar{H}_2^t\hat{J}_{2m})H_2.
\end{split}
\right.
\end{equation}
Here
\[\widehat{W}=W_0+W_1W_0 (\hat{J}_{2m}\bar{W}_1^t\hat{J}_{2m})+W_2W_0(\hat{J}_{2m}\bar{W}_2^t\hat{J}_{2m}) .\]
The first equation of \eqref{eq-wh} yields
$$W_2=H_2W_0^{-1}=\left(
        \begin{array}{ccc}
          g\hat{c} & 0 & g\hat{d} \\
          0 & 0 & 0 \\
          0 & 0 & 0 \\
        \end{array}
      \right).$$
Next we evaluate the second matrix equation of \eqref{eq-wh}. We compute
\begin{equation*}
\begin{split}
&H_1+ (\hat{J}_{2m}\bar{H}_1^t\hat{J}_{2m})H_2=\left(
        \begin{array}{ccc}
          0 & f & 0 \\
          0 & 0 & -f^{\sharp}+\bar{f}^tE_1g \\
          0 & 0 & 0 \\
        \end{array}
      \right),\\
      &\hat{J}_{2m}\bar{W}_1^t\hat{J}_{2m}=\left(
        \begin{array}{ccc}
          0 & -E_1\bar{v}^{\sharp t}-E_2\bar{u}^{\sharp t} & 0 \\
          \bar{u}^tE_1+\bar{v}^{t}E_3 & 0 &   \bar{u}^tE_2+\bar{v}^{t}E_4 \\
          0 & -E_3\bar{v}^{\sharp t}-E_4\bar{u}^{\sharp t}  & 0 \\
        \end{array}
      \right),\\
 &H_2 (\hat{J}_{2m}\bar{W}_1^t\hat{J}_{2m})=\left(
        \begin{array}{ccc}
          0 &-gE_3\bar{v}^{\sharp t}-gE_4\bar{u}^{\sharp t} & 0 \\
       0& 0 & 0 \\
          0 &0 & 0 \\
        \end{array}
      \right),\\ & W_1W_0=\left(
        \begin{array}{ccc}
          0 & uq & 0 \\
       -v^{\sharp}a-u^{\sharp}c& 0 & -v^{\sharp}b-u^{\sharp}d \\
          0 &vq & 0 \\
        \end{array}
      \right).
      \end{split}
\end{equation*}
As a consequence we read off the equations
\begin{equation*}
vq=0,\ -v^{\sharp}a-u^{\sharp}c=0,\ uq-gE_3\bar{v}^{\sharp t}-gE_4\bar{u}^{\sharp t}= f,\
 -v^{\sharp}b-u^{\sharp}d =-f^{\sharp}+\bar{f}^tE_1g.
\end{equation*}
Since $q$ is invertible, $v=0$. Therefore these equations reduce to the following ones:
\begin{equation*}
v=0,\ u^{\sharp}c=0,\ uq-gE_4\bar{u}^{\sharp t}= f,\  u^{\sharp}d =f^{\sharp}-\bar{f}^tE_1g.
\end{equation*}
Similarly, for the third equation of \eqref{eq-wh}, we first compute matrix expressions for the summands:
\begin{equation*}
\begin{split}
(\hat{J}\bar{H}_1^t\hat{J})H_1&=\left(
        \begin{array}{ccc}
          0& 0  & E_2\bar{f}^{t \sharp}{f}^{ \sharp} \\
         0& \bar{f}^tE_1f & 0  \\
          0& 0 & E_4\bar{f}^{t\sharp} {f}^{ \sharp} \\
        \end{array}
      \right),\\
      (\hat{J}\bar{H}_2^t\hat{J})H_2& =\left(
        \begin{array}{ccc}
          0& 0  & E_2\bar{g}^{t}E_1g \\
         0& 0 & 0  \\
          0& 0 & E_4\bar{g}^{t}E_1g \\
        \end{array}
      \right),\\
W_1W_0 (\hat{J}\bar{W}_1^t\hat{J})&=\left(
        \begin{array}{ccc}
          uq\bar{u}^tE_1& 0  & uq\bar{u}^{t}E_2 \\
         0& u^{\sharp}dE_4\bar{u}^{\sharp t} & 0  \\
          0& 0 & 0 \\
        \end{array}
      \right),\\
\hat{J}\bar{W}_2^t\hat{J}&=\left(
        \begin{array}{ccc}
       E_1\bar{\hat{c}}^t\bar{g}^tE_1+
       E_2\bar{\hat{d}}^t\bar{g}^tE_1 & 0  &    E_1\bar{\hat{c}}^t\bar{g}^tE_2+
       E_2\bar{\hat{d}}^t\bar{g}^tE_2\\
         0& 0& 0  \\
         E_3\bar{\hat{c}}^t\bar{g}^tE_1+
       E_4\bar{\hat{d}}^t\bar{g}^tE_1 & 0  &    E_3\bar{\hat{c}}^t\bar{g}^tE_2+
       E_4\bar{\hat{d}}^t\bar{g}^tE_2 \\
        \end{array}
      \right),\\
W_2W_0 (\hat{J}_{2m}\bar{W}_2^t\hat{J})&=\left(
        \begin{array}{ccc}
         gE_3\bar{\hat{c}}^t\bar{g}^tE_1+
       gE_4\bar{\hat{d}}^t\bar{g}^tE_1 & 0  &    gE_3\bar{\hat{c}}^t\bar{g}^tE_2+
       gE_4\bar{\hat{d}}^t\bar{g}^tE_2 \\     0& 0& 0  \\
          0& 0 & 0 \\
        \end{array}
      \right).\\
      \end{split}
\end{equation*}
Substituting these expressions into the third matrix equation of \eqref{eq-wh}, we derive that $c=0.$
Therefore we have
\begin{equation*}\left\{
\begin{split}
&c=0,\ a+ uq\bar{u}^tE_1+ gE_4\bar{\hat{d}}^t\bar{g}^tE_1=I_2,\\
 &b+uq\bar{u}^{t}E_2+gE_4\bar{\hat{d}}^t\bar{g}^tE_2 =E_2\bar{f}^{t \sharp}{f}^{ \sharp} +E_2\bar{g}^{t}E_1g,\\
& q + u^{\sharp}dE_4\bar{u}^{\sharp t} =I_{2m-4}+\bar{f}^tE_1f,\ d=I_{2}+E_4\bar{f}^{t\sharp} {f}^{ \sharp}+ E_4\bar{g}^{t}E_1g.\\
\end{split}
\right.
\end{equation*}
Summing up, we obtain \eqref{eq-iwa:1}.

The only thing left to prove Lemma \ref{lemma-mini3} is the statement of the form of $L_0$.
To derive this, we first consider $W_0$.  Recall that
\[ W_0=W^{-1}\hat\tau(H)^{-1}H\hat\tau(W)=\hat\tau(W_0)^{-1}=\hat{J}_{2m}\bar{W}_0^t\hat{J}_{2m},\]
and
$ W_0\in G(2m ,\mathbb{C})$. Hence, in particular, we also have
\[W_0^tJ_{2m}W_0=J_{2m}.\]
A direct computation using these equations shows
\begin{equation*}
 \tilde{W}_0=\left(
                  \begin{array}{cc}
                    a & b \\
                    c & d \\
                  \end{array}
                \right)=\left(
    \begin{array}{cccc}
      a_{11} & a_{12} & a_{13} & a_{14} \\
      0 & a_{22} & 0 & \bar{a}_{12} \\
      0 & 0 & a_{33} &\bar{a}_{13}  \\
      0 & 0 & 0 &
      \bar{a}_{11} \\
    \end{array}
  \right),\end{equation*}
with
\begin{equation*}
 |a_{11}|^2=1,\ a_{22}a_{33}=1,\
 \bar{a}_{13}=-\frac{a_{12}\bar{a}_{11}}{a_{22}},\ \bar{a}_{14}=\frac{|a_{12}|^2}{a_{22}}.
\end{equation*}
Set \begin{equation*}
\tilde{l}_0=\left(
                  \begin{array}{cc}
                    l_1 & l_3 \\
                    0 & l_4 \\
                  \end{array}
                \right)=\left(
    \begin{array}{cccc}
      l_{11} & l_{12} & l_{13} & l_{14} \\
      0 & l_{22} & 0 & l_{24} \\
      0 & 0 & l_{33} &l_{34}  \\
      0 & 0 & 0 & l_{44} \\
    \end{array}
  \right),\end{equation*}
with $l_{ij}$ satisfying
\[a_{11}=l_{11}^2,\ a_{12}=l_{11}l_{12}+\bar{l}_{24}l_{22},\ a_{22}=|l_{22}|^2,\]
and
\[l_{14}=-\frac{l_{12}l_{13}}{l_{11}},\ l_{24}=-\frac{l_{13}l_{22}}{l_{11}},\ l_{33}=\frac{1}{l_{22}},\ l_{34}=-\frac{l_{12}}{l_{11}l_{22}},\  l_{44}=\frac{1}{l_{11}}.\]
Moreover, let $l_0$ be a solution to
\[q=\bar{l}_0l_0.\]
Applying \eqref{eq-def-tau-inv}, it is a straightforward computation to verify that
\[\hat{\tau}(L_0)^{-1}L_0=\hat{J}_{2m}\bar{L}_0^t\hat{J}_{2m}L_0=W_0.\]
This finishes the proof of Lemma \ref{lemma-mini3}.
 \hfill$
\Box $

\begin{remark}  In the proof above  the splitting $q=\bar{l}_0l_0$ implies a strong restriction on $q$. This condition will always be satisfied near the identity. While in general it may happen  that one needs some middle term in the splitting $q=\bar{l}_0l_0$ due to the non-globality  of the Iwasawa splitting in our case. Actually the situation is much more complicated (for a similar situation, see \cite{B-R-S}). Since we have two open Iwasawa cells by Section 6 \cite{DoWa1}, Theorem 6.7, it may happen that $q$ starts at $I$ in the first open Iwasawa cell $I_1$ for some $z_0$ and   moves forward to  the boundary between the  two open Iwasawa cells $I_1$ and $I_2$. It could touch the boundary and return to $I_1$ or it could cross into $I_2$. What this means geometrically has been investigated only to a very small extent so far, but would seem to be a highly interesting project. But there are certainly cases where everything works just fine. See the example below.
\end{remark}

\begin{example}
 Let $2m=6$ and assume
$$f=\left(
      \begin{array}{cc}
        f_1 & f_2 \\
        f_3 & f_{4} \\
      \end{array}
    \right),\ g=\left(
      \begin{array}{cc}
        g_1 & g_2 \\
        g_3 & g_{4} \\
      \end{array}
    \right),$$
    with $f_j$, $g_j$, $j=1,\ldots,4$, meromorphic functions.
It is easy to derive that
\[g_1+g_4=-f_1f_4-f_2f_3,\ g_2=-f_1f_2,\ g_3=-f_3f_4\]
holds. From the last equation in \eqref{eq-iwa:1A}, and from
 \begin{equation*} \bar{f}^{t\sharp} {f}^{ \sharp}=J\bar{f}f^tJ=\left(
      \begin{array}{cc}
       |f_3|^2+|f_4|^2& \bar{f}_3f_1 + \bar{f}_4f_2 \\
           \bar{f}_1f_3 + \bar{f}_2f_4&    |f_1|^2+|f_2|^2 \\
      \end{array}
    \right),\  \end{equation*}
    and
    \begin{equation*} \bar{g}^{t}E_1g=\left(
      \begin{array}{cc}
        |g_3|^2 & \bar{g}_3g_4 \\
        \bar{g}_4g_3 & |g_4|^2 \\
      \end{array}
    \right),\end{equation*}
we obtain
\begin{equation*}d=\left(
      \begin{array}{cc}
        (1+|f_3|^2)(1+|f_4|^2) &  \bar{f}_3f_1 + \bar{f}_4f_2 \\
        0 & 1 \\
      \end{array}
    \right)=\left(
      \begin{array}{cc}
        d_1 & d_2 \\
        0 & 1 \\
      \end{array}
    \right).
\end{equation*}
Therefore
\begin{equation*}
a_1=1,\ a_5^{-1}=(1+|f_3|^2)(1+|f_4|^2),\ a_2=-\frac{f_1\bar{f}_3 + f_2\bar{f}_4+\bar{g}_3g_4 }{1+|f_3|^2+|f_4|^2+|g_3|^2}.
\end{equation*}
Moreover, we have
$$d^{-1}=\left(
      \begin{array}{cc}
        d_1^{-1} & -d_1^{-1}d_2 \\
        0 & 1 \\
      \end{array}
    \right).$$
From \eqref{eq-iwa:1B} we obtain
\begin{equation*}
u^{\sharp}=\left(
      \begin{array}{cc}
        f_4-\bar{f}_3g_3 & f_2-\bar{f}_3g_4  \\
       f_3-\bar{f}_4g_3 & f_1-\bar{f}_4g_4  \\
      \end{array}
    \right)\cdot d^{-1}=\left(
      \begin{array}{cc}
        u_4 & u_2 \\
        u_3 & u_{1} \\
      \end{array}
    \right)
\end{equation*}
with
\begin{equation*}
u_1=\frac{ f_1-f_2f_3\bar{f}_4-\bar{f}_4g_4}{1+|f_3|^2},\ \
u_2=\frac{ f_2-f_1\bar{f}_3f_4-\bar{f}_3g_4}{1+|f_4|^2},\ \
u_3=\frac{f_3}{1+|f_3|^2},\ \
u_4=\frac{f_4}{1+|f_4|^2}.\ \
 \end{equation*}
Moreover, since
\[\bar{f}^tE_1f=\left(
      \begin{array}{cc}
        \bar{f}_3f_3 & \bar{f}_3f_4  \\
        \bar{f}_4f_3 & \bar{f}_4f_4  \\
      \end{array}
    \right),\]\[ u^{\sharp}dE_4\bar{u}^{\sharp t}=\left(
      \begin{array}{cc}
        |u_4|^d_1 & u_4\bar{u}_3d_1 \\
        u_3\bar{u}_4d_1 & |u_3|^2d_1 \\
      \end{array}
    \right)=\left(
      \begin{array}{cc}
        \frac{|f_4|^2(1+|f_3|^2)}{1+|f_4|^2} & \bar{f}_3f_4  \\
        \bar{f}_4f_3 & \frac{|f_3|^2(1+|f_4|^2)}{1+|f_3|^2}  \\
      \end{array}
    \right),\]
from \eqref{eq-iwa:1C}, we have
\begin{equation*}
q=\left(
      \begin{array}{cc}
        \frac{1+|f_3|^2}{1+|f_4|^2} & 0\\
        0 & \frac{1+|f_4|^2}{1+|f_3|^2} \\
      \end{array}
    \right).
\end{equation*}
 Clearly, such expression can be written in the form $q=\bar{l}_1^tl_1$, and the above computations provides a global solution of equation \eqref{eq-iwa:1}. This shows that for the case $2m=6$ there exists a global Iwasawa splitting for harmonic maps with normalized potential of the form in Theorem \ref{th-minimal}.
\end{example}

\subsection{The Maurer-Cartan form of $\tilde{F}$}

\ \\
{\em Proof of Lemma \ref{lemma-mini4}:}\vspace{2mm}

To take a look at the Maurer-Cartan form of $\tilde{F}$, we first recall that
$$H^{-1}H_z=\lambda^{-1} \eta_{-1}=\lambda^{-1}\left(
                                \begin{array}{ccc}
                                  0 & \tilde{f}  & 0 \\
                                  0 & 0 & -\tilde{f}^{\sharp} \\
                                  0 & 0 & 0 \\
                                \end{array}
                              \right).
$$
Since
\[\tilde{F}^{-1}\tilde{F}_z=\lambda^{-1} L_0\hat{\tau}(W)^{-1}\eta_{-1}\hat{\tau}(W)L_0^{-1}+L_0\hat{\tau}(W)^{-1}\hat{\tau}(W)_zL_0^{-1}
+L_0(L_0^{-1})_z,\]
and \[\hat\tau(W)=I_{2m}+\lambda\hat\tau(W_1)+\lambda^2\hat\tau(W_2),\]
we may assume that
\[\tilde{F}^{-1}\tilde{F}_zdz=\lambda^{-1} \tilde\alpha_{-1}+
\tilde\alpha_0+\lambda\tilde{\alpha}_1+\lambda^2\tilde{\alpha}_2+\lambda^3\tilde{\alpha}_3+\lambda^4\tilde{\alpha}_4,\
\tilde\alpha_{-1}=L_0\eta_{-1}L_0^{-1}dz.\]
On the other hand, the reality condition yields
\[\hat\tau(\tilde{F}^{-1}d\tilde{F})=\tilde{F}^{-1}d\tilde{F}
,\] and
\[\tilde{F}^{-1}\tilde{F}_zdz=\lambda^{-1}\tilde{\alpha}_{-1}+\tilde{\alpha}_0+\lambda\tilde{\alpha}_{1}\]
with \[\tilde\alpha_1=\hat\tau(\tilde\alpha_{-1}),\ ~\hbox{ and }\ ~ \tilde\alpha_0=\hat\tau(\tilde\alpha_0).\]
Moreover, a straightforward computation yields
\[\tilde\alpha_0'=L_0[\eta_{-1},\hat\tau(W_1)]L_0^{-1}+L_0(L_0^{-1})_zdz.\]
Since $L_0$ is an upper triangular block matrix, $\tilde\alpha'$ is of the desired form stated in \eqref{eq-m-c-min2}.

\hfill $ \Box $
\ \\\\
{\em Proof of Lemma \ref{lemma-mini5}:}
\vspace{2mm}

  By Lemma \ref{lemma-mini4}, there exists a frame $\tilde{F}$ such that $\tilde\alpha'$ is of the form stated in \eqref{eq-m-c-min2}. By (3.3), (3.5) and (3.7), we obtain
  \[\alpha'=\mathcal{P}^{-1}(\tilde \alpha')=\left(
                   \begin{array}{cc}
                     A_1 & \lambda^{-1}B_1 \\
                     -\lambda^{-1}B_1^tI_{1,3} & A_2 \\
                   \end{array}
                 \right) d z,\]
                 with
                 \[A_1=\left(
         \begin{array}{ccccc}
         0 &  a_{12} &  a_{13} & a_{14} \\
          a_{12}  & 0 &  a_{13} & a_{14} \\
            a_{13} & -a_{13} & 0 & a_{34} \\
            a_{14}  & -a_{14}& -a_{34}  & 0 \\
         \end{array} \right),\]
         and
         \[\ B_1 = \left(
                                          \begin{array}{ccccccc}
                                            h_{11} & \hat{h}_{11} &  h_{12} & \hat{h}_{12} &\ldots &  h_{1,m-2}& \hat{h}_{1,m-2} \\
                                             h_{11} & \hat{h}_{11} &  h_{12}& \hat{h}_{12}&\ldots &  h_{1,m-2} & \hat{h}_{1,m-2}\\
                                            h_{31}& \hat{h}_{31} &  h_{32}& \hat{h}_{32} &\ldots &  h_{3,m-2}& \hat{h}_{3,m-2} \\
                                            ih_{31}& i\hat{h}_{31} &  ih_{32}& i\hat{h}_{32}&\ldots &  ih_{3,m-2}& i\hat{h}_{3,m-2} \\
                                          \end{array}
                                        \right).\]
Set \[F=\mathcal{P}^{-1}(\tilde{F})=(\phi_1,\phi_2,\phi_3,\phi_4,\psi_1,\ldots,\psi_{2m-4}).\]
 We have now
\begin{equation*}
\left\{
                  \begin{array}{cc}
                  \phi_{1z}=& a_{12}\phi_2+ a_{13}\phi_3+ a_{14}\phi_4+h_{11}\psi_1+\ldots+\hat{h}_{1,m-2}\psi_{2m-4},\\
                  \phi_{2z}=& a_{12}\phi_1-a_{13}\phi_3- a_{14}\phi_4-h_{11}\psi_1-\ldots-\hat{h}_{1,m-2}\psi_{2m-4}.\\
                      \end{array}\right.
 \end{equation*}
This indicates
 \[(\phi_1+\phi_2)_z=a_{12}(\phi_1+\phi_2).\]
 Since $\phi_1+\phi_2$ is a non-zero real vector-valued function, let $\phi_{01}$ be the first coordinate of $\phi_1+\phi_2$, it is straightforward to verify that $\frac{1}{\phi_{01}}(\phi_1+\phi_2)$ is a non-zero constant lightlike vector. As a consequence, a well-known fact states that if $\mathcal{F}$ is the conformal Gauss map of a Willmore surface $y$,
 $y$ is M\"{o}bius equivalent to some minimal surface in $\mathbb{R}^{2m-2}$  \cite{Helein}.

 \hfill$\Box $

\vspace{2mm}

{\bf Acknowledgements}: The author is thankful to Professor Josef. Dorfmeister, Professor Changping Wang and Professor Xiang Ma for their suggestions and encouragement.   This work is supported by the Project 11201340 of NSFC.

 \bigskip
\address{ 
Department of Mathematics \\
Tongji University \\
 Siping Road 1239,Shanghai, 200092, \\
 P. R. China
 }
 { netwangpeng@tongji.edu.cn}
 \end{document}